\makeatletter\@addtoreset{equation}{section}\makeatother
\newtheorem{thm}{Theorem}[section]
\newtheorem{stat}[thm]{Statement}
\newtheorem{lem}[thm]{Lemma}
\newtheorem{hyp}{Hypothesis}
\newtheorem{defn}[thm]{Definition}
\newtheoremstyle{named}{}{}{\itshape}{}{\bfseries}{.}{.5em}{\thmnote{#3's }#1}
\theoremstyle{named}
\theoremstyle{definition}
\newenvironment{Acknowledgment}%
{\begin{trivlist}\item[]\textbf{Acknowledgments }}{\end{trivlist}}
\newcommand{\R}{\mathbb{R}}
\newcommand{\C}{\mathbb{C}}
\newcommand{\Z}{\mathbb{Z}}
\newcommand{\rmi}{\mathrm{i}}
\newcommand{\rme}{\mathrm{e}}
\def\Im{\mathop\mathrm{Im}\nolimits}    
\begin{document}

\title{Localized synchrony patterns in weakly coupled bistable oscillator systems}

\author[1]{Erik Bergland}
\author[2]{Jason Bramburger}
\author[3]{Bj\"orn Sandstede}
\affil[1]{\small One Health Trust, Washington, DC, USA}
\affil[2]{\small Department of Mathematics and Statistics, Concordia University, Montr\'eal, QC, Canada}
\affil[3]{\small Division of Applied Mathematics, Brown University, Providence, RI, USA}

\date{}

\maketitle

\begin{abstract}
Motivated by numerical continuation studies of coupled mechanical oscillators, we investigate branches of localized time-periodic solutions of one-dimensional chains of coupled oscillators. We focus on Ginzburg--Landau equations with nonlinearities of Lambda-Omega type and establish the existence of localized synchrony patterns in the case of weak coupling and weak-amplitude dependence of the oscillator periods. Depending on the coupling, localized synchrony patterns lie on a discrete stack of isola branches or on a single connected snaking branch.
\end{abstract}


\section{Introduction}

Coupled oscillators arise in many natural and engineering systems, and their dynamics has been the focus of much work, including \cite{kuramoto1984chemical, ermentrout1991multiple, ermentrout1990oscillator, brown2003globally, heagy1994synchronous, hoppensteadt1997weakly}. A typical scenario starts with identical oscillators that, when uncoupled, evolve to a stable periodic orbit of a dynamical system. When these oscillators are coupled, synchronous oscillations can arise where the phases of the different oscillators lock to generate a globally periodic motion. Prominent examples of the emergence of synchronized oscillations are neuronal activity patterns in the brain, the firing patterns of fireflies, and Huygens synchronization of clocks.

\begin{figure}
\centering
\includegraphics[scale=0.8]{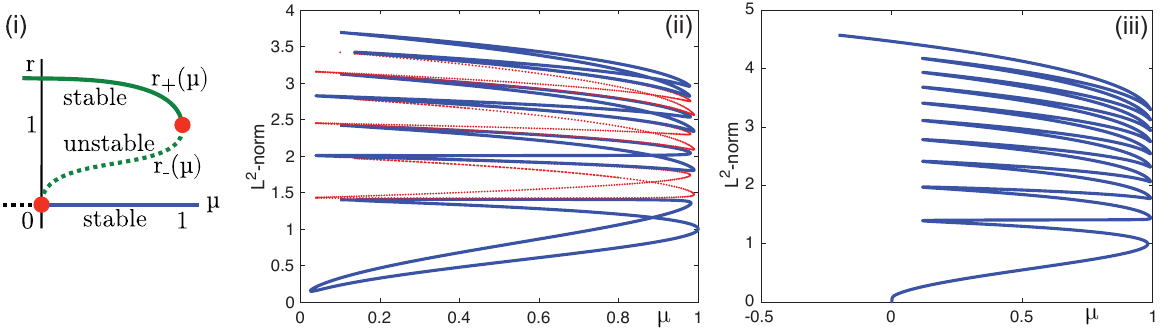}
\caption{Panel~(i) illustrates our assumption that there are two oscillatory states ($r_-(\mu)$ is unstable, and $r_+(\mu)$ is stable) and one stationary stable state $r=0$ inside the bistability region $0<\mu<1$. The stationary state $r=0$ exhibits a Hopf bifurcation at $\mu=0$, while the two oscillatory states merge in a fold bifurcation at $\mu=1$. Panels~(ii) and~(iii) show numerical continuation results of, respectively, isola and snaking branches of \eqref{i:model} with conservative ($c=\rmi$) and dissipative ($c=1$) coupling for $N=10$ nodes with coupling strength $\varepsilon=0.01$ and the nonlinearity given in \eqref{e:exnonl}.}
\label{f:synchrony}
\end{figure}

Our focus will be on weakly coupled oscillator systems where each oscillator admits bistable dynamics with a stable oscillation (represented by a stable periodic orbit), an unstable oscillation, and a stable rest state that coexist as indicated in Figure~\ref{f:synchrony}(i). We are interested in the emergence and the parameter-dependence of solutions where a set of neighboring nodes exhibit synchronized oscillations, while all other nodes remain at the rest state: we refer to such solutions as localized synchrony patterns. Understanding localized synchrony patterns is relevant for localized breathers of discrete nonlinear Schr\"odinger equations \cite{shi2015existence, parker2020existence, flach1998discrete, cuevas2019discrete, parker2021standing} and localized oscillations in chains of coupled mechanical oscillators \cite{clerc2018chimera, fontanela2018dark, niedergesass2021experimental, papangelo2018multiple, vakakis2001normal, shiroky2020nucleation, chiu2001synchronization, papangelo2017snaking}.

Complicated branches of localized synchrony patterns were found in \cite{papangelo2017snaking} in a one-mode Fourier approximation arising in the harmonic-balance method (HBM) approximation of a system of weakly-coupled mechanical oscillators with linear springs and nonlinear dampers. The HBM model of this mechanical system is of the form
\begin{equation}\label{i:model}
\dot Z_n = f(|Z_n|,\mu) Z_n + \varepsilon c (Z_{n+1}-2Z_n+Z_{n-1}),
\qquad Z_n\in\C, \qquad n\in\Z
\end{equation}
for small coupling strength $0<\varepsilon\ll1$ and an appropriate complex-valued nonlinearity $f$ that depends on a parameter $\mu$. For the mechanical system considered in \cite{papangelo2017snaking}, the coupling constant $c$ is given by $c=\rmi$ and represents conservative coupling. For dissipative coupling with coupling constant $c=1$, the system (\ref{i:model}) can be interpreted as a generalized Ginzburg--Landau (or Lambda-Omega) system that describes, for instance, spiral waves in weakly coupled excitable media; see \cite{aguareles2016asymptotic, paullet1994existence, cohen1978rotating, troy2022logarithmic} for the spatially continuous setting and \cite{bramburger2020stable, paullet1998spiral, bramburger2019rotating} for systems posed on lattices. In Figure~\ref{f:synchrony}, we demonstrate through numerical simulations that localized synchrony patterns emerge in the model \eqref{i:model} that lie on discrete stacks of isolas for $c=\rmi$ and on connected snaking branches for $c=1$. The structure of the localized synchrony patterns along these branches is illustrated in Figure~\ref{f:patterns}.

\begin{figure}
\centering
\includegraphics[scale=1]{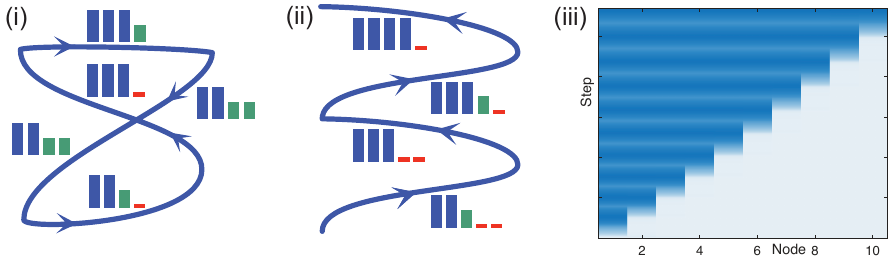}
\caption{Panels~(i) and~(ii) show how the localized synchrony patterns change along the isola and snaking branches, where the tall (blue), medium (green), and low (red) rectangles represent $r_+$, $r_-$, and $0$, respectively. Panel~(iii) contains numerical continuation results that indicate how nodes are recruited along the snaking branch, starting with a single small-amplitude oscillator and ending with a fully oscillatory pattern.}
\label{f:patterns}
\end{figure}

In this paper, we explore the following overarching questions: under weak coupling, do oscillator chains exhibit localized synchrony patterns across the bistability region and, if so, what is the geometry of the resulting branches. Our results for the system \eqref{i:model} with weak coupling $0<\varepsilon\ll1$ can be summarized informally as follows:
\begin{compactenum}[(1)]
\item For dissipative coupling ($c=1$) and with $\Im f(r,\mu):=\omega_0(\mu)$, localized synchrony patterns exist and lie on connected snaking branches [rigorous].
\item For conservative coupling ($c=\rmi$) and with $\Im f(r,\mu):=\omega_0(\mu)$, localized synchrony patterns exist and lie on discrete stacks of isolas [not rigorous].
\item Localized synchrony patterns across the bistability region cannot occur when the frequencies of stable and unstable oscillations are not sufficiently close to each other compared to the coupling strength $\varepsilon$ for $0<\varepsilon\ll1$ [rigorous].
\end{compactenum}
We refer to \S\ref{s:2} for the formal statements of these results for \eqref{i:model}. In particular, our results for \eqref{i:model} suggest that the isolas found numerically for the HBM approximation of the mechanical oscillator system considered in \cite{papangelo2017snaking} may not persist for the full mechanical system, and we explore this in more detail in \S\ref{s:2.5}.

Our results indicate that the specific nature of the coupling strongly impacts the geometry of branches of localized synchrony patterns: discrete stacks of isolas and connected snaking branches are possible for the same bistable dynamics of uncoupled oscillator nodes, and which of these cases occurs depends only on the structure of the linear coupling across nodes. We will comment more in \S\ref{s:5} on how general these results are beyond the system \eqref{i:model}.

The remainder of this paper is organized as follows. In \S\ref{s:2}, we present our main results and the application to the full mechanical system considered in \cite{papangelo2017snaking}. The proofs of our main results are contained in \S\ref{s:3}-\ref{s:4}, and we end in \S\ref{s:5} with conclusions and a discussion of open problems.


\section{Results: Localized synchrony patterns of coupled oscillators}\label{s:2}

In this section, we describe our main results. We introduce the precise setup in \S\ref{s:2.1} and outline our results for snaking branches under dissipative coupling and isola branches under conservative coupling in \S\ref{s:2.2} and \S\ref{s:2.3}, respectively. In \S\ref{s:2.4}, we argue why localized synchrony patterns cannot lie on isola or snaking branches under weak coupling when the stable and unstable oscillations have different frequencies. We also apply our results to both the full model and the harmonic-balance approximation of the mechanical oscillator chain investigated in \cite{papangelo2017snaking}.


\subsection{Model system}\label{s:2.1}

We consider the system
\begin{equation}\label{e:ode}
\dot Z_n = f(|Z_n|,\mu, \varepsilon) Z_n + \varepsilon c (Z_{n+1}-2Z_n+Z_{n-1}),
\qquad Z_n\in\C, \qquad n\in\Z
\end{equation}
of coupled oscillators for a fixed coupling constant $c\in\C$ with $|c|=1$. The system \eqref{e:ode} is gauge invariant: if $(Z_n(t))_{n\in\Z}$ is a solution to \eqref{e:ode}, then so is $(\rme^{\rmi\alpha}Z_n(t))_{n\in\Z}$ for each fixed $\alpha\in\mathbb{R}$. Gauge invariance allows us to seek periodic solutions as relative equilibria that are of the form
\[
Z_n(t) = \rme^{\rmi\rho t} z_n, \qquad n\in\Z
\]
for time-independent complex amplitudes $z_n$, where $\rho\in\mathbb{R}$ is the temporal frequency of the periodic orbit. The amplitudes $z_n$ then satisfy the algebraic system
\begin{equation}\label{e:alg}
\rmi\rho z_n = f(|z_n|,\mu,\varepsilon) z_n + \varepsilon c (z_{n+1}-2z_n+z_{n-1}), \quad n\in\Z.
\end{equation}
We say that a solution $(z_n)_{n\in\Z}$ of \eqref{e:alg} is an \textbf{on-site} solution if the solution is invariant under the reflection across $n=1$ so that $z_n=z_{2-n}$ for all $n$, while a solution is said to be an \textbf{off-site} solution if it is invariant under the reflection across $n=\frac12$ so that $z_n=z_{1-n}$ for all $n$. We can find on- and off-site solutions by restricting the index $n$ to $n\geq1$ and including the boundary conditions $z_0=z_2$ for on-site and $z_0=z_1$ for off-site solutions. We write $z_n=r_n\rme^{\rmi\theta_n}$ with $r_n,\theta_n\in\mathbb{R}$ for each $n$. Gauge invariance implies that solutions depend only on the amplitudes $r_n$ and the phase differences $\phi_n:=\theta_{n+1}-\theta_n$, rather than on the individual phases $\theta_n$.

Focusing our analysis on on- and off-site solutions, restricting the infinite lattice $\Z$ to a finite lattice with off-site boundary conditions at $n=N+1$ (we remark that on-site boundary conditions work also), and writing $f(r,\mu,\epsilon)=\lambda(r,\mu)+\rmi\omega(r,\mu,\epsilon)$ for real-valued nonlinearities $\lambda$ and $\omega$, we see that equation \eqref{e:alg} for the complex amplitudes $z_n$ becomes the system
\begin{equation}\label{e:falg}
0 = \lambda(r,\mu) r_n + \rmi(\omega(r_n,\mu,\varepsilon)-\rho) r_n + \varepsilon c \left( r_{n+1}\rme^{\rmi\phi_n} - 2r_n + r_{n-1}\rme^{-\rmi\phi_{n-1}}\right), \quad 1\leq n\leq N
\end{equation}
with the boundary conditions
\begin{equation}\label{e:fbc}
\begin{array}{rclcl}
(r_0,\phi_0) 		& := & (r_2,-\phi_1) & & \mbox{for  on-site solutions at } n=0 \\
(r_0,\phi_0) 		& := & (r_1,0)       & & \mbox{for off-site solutions at } n=0 \\
(r_{N+1},\phi_N)	& := & (r_N,0)       & & \mbox{off-site boundary condition at } n=N+1
\end{array}
\end{equation}
for the amplitudes $\boldsymbol{r}=(r_n)_{1\leq n\leq N}$ and the phase differences $\boldsymbol{\phi}=(\phi_n)_{1\leq n\leq N-1}$. In particular, the variables we have to solve for are given by
$(\boldsymbol{r},\boldsymbol{\phi},\rho) = ((r_n)_{1\leq n\leq N},(\phi_n)_{1\leq n\leq N-1},\rho) \in X:=\mathbb{R}^N\times\mathbb{R}^{N-1}\times\mathbb{R}$.


\subsection{Snaking branches for dissipative coupling}\label{s:2.2}

We set $c=1$ so that \eqref{e:falg} becomes
\begin{equation}\label{e:fdiss}
0 = \lambda(r_n,\mu) r_n + \rmi(\omega(r_n,\mu,\varepsilon)-\rho) r_n + \varepsilon \left( r_{n+1}\rme^{\rmi\phi_n} - 2r_n + r_{n-1}\rme^{-\rmi\phi_{n-1}}\right), \quad 1\leq n\leq N.
\end{equation}
We make the following assumptions on the nonlinearities $\lambda$ and $\omega$, which we illustrated in Figure~\ref{f:synchrony}(i). The first hypothesis on $\lambda$ encodes bistability.

\begin{hyp}\label{h:1}
The function $\lambda\colon\mathbb{R}^2\to\mathbb{R}, (r,\mu)\mapsto\lambda(r,\mu)$ is smooth and satisfies the following:
\begin{compactenum}
\item[Symmetry:] The function $\lambda$ is even in $r$ so that $\lambda(-r,\mu) = \lambda(r,\mu)$.
\item[Equilibria:] For each $\mu\in(0,1)$, the function $\lambda(r,\mu)$ has exactly two nontrivial positive roots, denoted $r = r_\pm(\mu)$, with $0 < r_-(\mu) < r_+(\mu)$.
\item[Stability:] For each $\mu\in(0,1)$ we have $\lambda(0,\mu),\lambda_r(r_+(\mu),\mu) < 0 < \lambda_r(r_-(\mu),\mu)$.
\item[Pitchfork bifurcation:] At $\mu = 0$, the roots $r = 0$ and $r = \pm r_-(\mu)$ of the function $r\lambda(r,\mu)$ collide in a generic subcritical pitchfork bifurcation.
\item[Fold bifurcation:] At $\mu = 1$, the roots $r = r_\pm(\mu)$ collide in a generic saddle-node bifurcation at $r = 1$.
\end{compactenum}
\end{hyp}

The following hypothesis on $\omega$ assumes that the amplitude of oscillations affects their frequency only at higher order in the coupling strength $\varepsilon$.

\begin{hyp}\label{h:2}
There are smooth real-valued functions $\omega_0(\mu)$ and $\omega_2(r,\mu,\varepsilon)$ so that $\omega(r,\mu,\varepsilon)=\omega_0(\mu)+\varepsilon^2\omega_2(r,\mu,\varepsilon)$.
\end{hyp}

To formulate our results on the existence and branch structure of localized synchrony patterns, we first define the curves $\mu_*(s)$, $R_\pm(s)$, and $R_0(s)$ that trace out the parameter $\mu$ and the roots $r_\pm(\mu)$ of the function $\lambda$ as a function of an arclength parameter $s$. The parametrized curves are given by
\[
\mu_*(s) :=
\begin{cases}
s   & 0\leq s\leq 1 \\
2-s & 1\leq s\leq 2,
\end{cases}
\qquad
R_0(s) :=
\begin{cases}
r_-(\mu_*(s)) & 0\leq s\leq 1 \\
r_+(\mu_*(s)) & 1\leq s\leq 2 \\
\end{cases}
\qquad
R_\pm(s) := r_\pm(\mu_*(s)), \quad 0\leq s\leq 2.
\]
and refer to Figure~\ref{f:snaking}(i) for an illustration. Note that the values of $\mu_*(s)$ move back and forth between $\mu=0$ and $\mu=1$ as $s$ increases, while $R_+(s)$ and $R_-(s)$ stay, respectively, on the upper and lower branch of periodic orbits as $s$ varies. Finally, $R_0(s)$ starts at $R_0(0)=0$, traverses the full branch of periodic orbits from $r_-$ to $r_+$ as $s$ increases, and ends at $R_0(2)=r_+(0)$. For each integer $N$ and each $0<\delta\ll1$, we define the connected branch
\begin{eqnarray*}
\Gamma_0^\delta(N) & := &
\left\{ (\boldsymbol{r}, \boldsymbol{\phi}, \rho, \mu) =
\left( (R_0(s), \underbrace{0,\ldots,0}_{2\leq j\leq N}),
\boldsymbol{0}, \omega_0(\mu_*(s)), \mu_*(s) \right) \colon \delta\leq s\leq2 \right\} \\ &&
\bigcup_{k=1}^{N-2}
\left\{ (\boldsymbol{r}, \boldsymbol{\phi}, \rho, \mu) =
\left( (\underbrace{R_+(s),\ldots,R_+(s)}_{1\leq j\leq k}, R_0(s), 0,\ldots,0), 
\boldsymbol{0}, \omega_0(\mu_*(s)), \mu_*(s) \right) \colon 0\leq s\leq2 \right\} \\ &&
\bigcup 
\left\{ (\boldsymbol{r}, \boldsymbol{\phi}, \rho, \mu) =
\left( (\underbrace{R_+(s),\ldots,R_+(s)}_{1\leq j\leq N-1}, R_0(s)),
\boldsymbol{0}, \omega_0(\mu_*(s)), \mu_*(s) \right) \colon 0\leq s\leq1-\delta \right\}
\end{eqnarray*}
of localized synchrony patterns in $X\times[0,1]$ at $\varepsilon=0$. This branch begins close to the stationary state $\boldsymbol{r}=0$ at $\mu=0$ where all nodes are zero and ends close to the domain-filling fully-oscillatory pattern $\boldsymbol{r}=(r_+(1)\ldots,r_+(1))$ at $\mu=1$ where all nodes are set to the oscillation $r_+(1)=r_-(1)$ at the fold bifurcation $\mu=1$. As the connected set $\Gamma_0^\delta(N)$ is traversed, the number of nodes in the localized synchrony pattern that exhibit oscillations grows from $0$ to $N$, and we refer to this phenomenon as ``snaking''; see Figure~\ref{f:snaking}(ii) for an illustration. Note that $\boldsymbol{\phi}=0$ along the branch so that all nodes in the localized synchrony patterns share the same phase.

\begin{figure}
\centering
\includegraphics[scale=0.9]{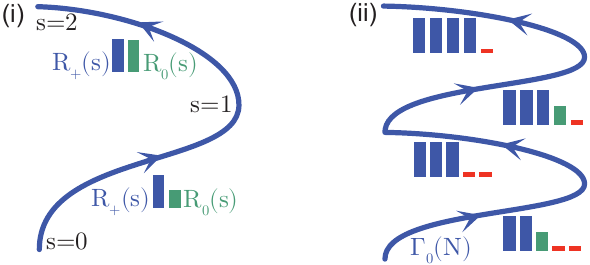}
\caption{Panel~(i) illustrates our parametrization of localized synchrony patterns along the snaking branch $\Gamma_0(N)$. Panel~(ii) indicates how the localized synchrony patterns change along the snaking branch. The oscillatory states along the snaking branch share the same phase so that the phase differences $\phi_n$ vanish.}
\label{f:snaking}
\end{figure}

Our main result establishes the existence of snaking branches $\Gamma_\varepsilon^\delta(N)$ of localized synchrony patterns close to $\Gamma_0^\delta(N)$ for each $N$ for dissipative coupling with sufficiently weak coupling strength $0<\varepsilon\ll1$. For each $\delta>0$ and each subset $\mathcal{S}\subset X\times[0,1]$, we denote by $\mathcal{U}_\delta(\mathcal{S})$ the open $\delta$-neighborhood of $\mathcal{S}$ in $X\times[0,1]$.

\begin{thm}[Snaking for dissipative coupling]\label{t:1}
Assume that $\lambda$ and $\omega$ satisfy Hypotheses~\ref{h:1} and \ref{h:2}, respectively. For each $\delta>0$ there exist constants $\delta_*,\varepsilon_*>0$ so that the following is true for each $\varepsilon\in(0,\varepsilon_*)$:
\begin{compactitem}
\item The set $\mathcal{U}_{\delta_*}(\Gamma_0^\delta(N))$ contains two unique, nonempty, smooth, connected branches $\Gamma_\varepsilon^\mathrm{on}(N)$ and $\Gamma_\varepsilon^\mathrm{off}(N)$ of, respectively, on-site and off-site localized synchrony patterns of \eqref{e:fdiss} with boundary conditions \eqref{e:fbc}.
\item The branches $\Gamma_\varepsilon^\mathrm{on/off}(N)$ depend smoothly on $\varepsilon$ and converge in $C^0$ to $\Gamma_0^\delta(N)$ as $\varepsilon\to0$.
\item The convergence of $\Gamma_\varepsilon^\mathrm{on/off}(N)$ to $\Gamma_0^\delta(N)$ is $C^m$ for each fixed $m\geq1$ outside the set $\mathcal{U}_{\delta_*}(X\times\{0,1\})$, and the branches inside $\mathcal{U}_{\delta_*}(X\times\{0,1\})$ consist of generic fold bifurcations.
\end{compactitem}
\end{thm}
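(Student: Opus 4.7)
The plan is to combine an implicit function theorem argument on the bulk of the branch $\Gamma_0^\delta(N)$ with local Lyapunov--Schmidt reductions at the finitely many ``snaking corners'' where one node transitions between the rest state $0$ and an oscillatory state $r_\pm(\mu)$. Throughout, I would break the gauge $\mathrm{SO}(2)$-symmetry of \eqref{e:ode} by restricting to the reflection-symmetric subspace encoded in the on-site (or off-site) boundary conditions \eqref{e:fbc}, which pins the phase of the symmetry node and yields two formally separate problems for $\Gamma_\varepsilon^{\mathrm{on}}(N)$ and $\Gamma_\varepsilon^{\mathrm{off}}(N)$, both analyzed by the same scheme.

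The main analytic difficulty at $\varepsilon=0$ is that the problem is highly degenerate: not only do the equations decouple into the scalar amplitude equations $\lambda(r_n,\mu)r_n=0$ together with $\omega_0(\mu)=\rho$, but none of the phase differences $\phi_n$ appear at all, so the linearization has an $(N-1)$-dimensional kernel in the $\boldsymbol{\phi}$-direction. To handle this, on each regular subinterval of $s$ bounded away from the discrete set where some $r_n$ equals $0$ or the fold value $r_+(1)=1$, I would introduce a geometric rescaling $z_{n_*+j}=\varepsilon^j\tilde z_j$ of the tail nodes starting from the frontier index $n_*$ adjacent to the active cluster. Substituting into \eqref{e:fdiss} and dividing by the appropriate power of $\varepsilon$, each tail node acquires a leading-order equation
\begin{equation*}
\bigl[\lambda(0,\mu)+\rmi(\omega_0(\mu)-\rho)\bigr]\tilde z_j + \tilde z_{j-1} = O(\varepsilon),
\end{equation*}
which is uniquely solvable for $\tilde z_j$ because $\lambda(0,\mu)<0$ on a regular subinterval, and hence pins both $|\tilde z_j|$ and the previously free phase $\arg\tilde z_j$. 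After this rescaling the full linearization at $\varepsilon=0$ is an isomorphism on the gauge-reduced space, and the implicit function theorem produces unique $C^m$ branches $\Gamma_\varepsilon^{\mathrm{on/off}}(N)$ that converge $C^m$ to $\Gamma_0^\delta(N)$ on each regular subinterval.

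The main obstacle is at the corners, where the rescaling breaks down because $\lambda(0,\mu)\to 0$ as $\mu\to 0$ or the amplitude equation for the frontier node becomes singular as $\mu\to 1$. Near such a corner I would perform a Lyapunov--Schmidt reduction that uses the invertibility of the remaining blocks to express $(r_n,\phi_n)_{n\neq n_*}$ and $\rho$ as smooth functions of $(u,\mu,\varepsilon)$, where $u:=r_{n_*}$, and substitutes back into the $n_*$-th equation. The resulting reduced scalar equation has the form
\begin{equation*}
0 = g(u,\mu) + \varepsilon h(u,\mu,\varepsilon),
\end{equation*}
where $g$ carries the pitchfork normal form at the $\mu=0$ corner or the saddle-node normal form at the $\mu=1$ corner inherited from Hypothesis~\ref{h:1}. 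The hardest part is to verify that $h(0,0,0)\neq 0$ at the pitchfork corners (which holds because the adjacent active node has amplitude $r_+(0)>0$ and, under dissipative coupling $c=1$ with phases near zero, transmits a real nonzero forcing to the frontier node), and that this symmetry-breaking forcing unfolds the pitchfork into exactly one generic fold whose two half-branches match onto the adjacent regular subintervals so that the global branch remains connected rather than pinching into isolas. At the $\mu=1$ corners the unperturbed fold persists as a nearby generic fold of the reduced equation under the $O(\varepsilon)$-perturbation by a standard argument.

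Finally, the regular and corner patches are glued by uniqueness: on overlapping regions, the implicit function theorem branches must agree inside a fixed $\delta_*$-tube of $\Gamma_0^\delta(N)$, producing unique connected branches $\Gamma_\varepsilon^{\mathrm{on}}(N)$ and $\Gamma_\varepsilon^{\mathrm{off}}(N)$. The $C^0$ convergence on the entire branch, the $C^m$ convergence outside $\mathcal{U}_{\delta_*}(X\times\{0,1\})$, and the existence of generic folds inside $\mathcal{U}_{\delta_*}(X\times\{0,1\})$ then follow directly from the construction.
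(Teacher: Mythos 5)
Your overall architecture (implicit function theorem on the bulk, local reductions at the corners, gluing by uniqueness) matches the paper's, and your geometric tail rescaling is essentially the paper's far-field ansatz $r_{k+j}\sim[\varepsilon/\lambda(0,\mu)]^{j}$. However, there is a genuine gap in how you resolve the phase degeneracy. The tail rescaling only pins the phases of the \emph{far-field} nodes relative to the last active node; the phase differences $\phi_1,\dots,\phi_{k-1}$ \emph{within} the active core still do not appear at leading order, because the imaginary part of the core equation at $\varepsilon=0$ reads $(\omega(r_n,\mu,0)-\rho)r_n=0$ and only determines $\rho$. Your claim that ``after this rescaling the full linearization at $\varepsilon=0$ is an isomorphism'' is therefore false as stated. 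The paper resolves this by writing $\rho=\omega_0(\mu)-\varepsilon\Omega$, using Hypothesis~\ref{h:2} (namely $\omega=\omega_0(\mu)+\varepsilon^2\omega_2$, the key structural assumption, which you never invoke) to divide the phase equations by $\varepsilon$, and then proving invertibility of the resulting core phase system in the unknowns $(\boldsymbol{\phi}^{\mathrm{c}},\Omega)$ via an explicit determinant computation for the matrices $\boldsymbol{A}_{\mathrm{on/off}}$. Without this step there is no control over the core phases or the frequency correction, and the bulk IFT argument does not close.

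A second, smaller gap is at the $\mu=0$ corner. You reduce to $0=g(u,\mu)+\varepsilon h(u,\mu,\varepsilon)$ and treat the unfolding of the pitchfork as a regular perturbation, but the fold created by the symmetry-breaking forcing lives at the singular scales $\mu\sim\varepsilon^{2/3}$ and $r_k\sim\varepsilon^{1/3}$ (the reduced equation is the cubic $0=-\tilde\mu\sigma_k+\sigma_k^3+\tilde\varepsilon$ after the blow-up $\mu=\tilde\mu\nu^2$, $\varepsilon=\tilde\varepsilon\nu^3$, $r_k=\nu\sigma_k$). Detecting the generic fold \emph{and} matching the two half-branches onto the adjacent regular pieces requires working in two coordinate charts ($\tilde\varepsilon=1$ and $\tilde\mu$ fixed), which is exactly the part you flag as ``the hardest part'' and then defer; the analogous matching is also needed at the $\mu=1$ corners to confirm that the fold connects the $r_k=r_-(\mu)$ and $r_k=r_+(\mu)$ bulk configurations rather than merely persisting locally. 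So the proposal identifies the right obstacles but leaves unproved precisely the two steps---invertibility of the core phase block and the singularly scaled corner matching---that constitute the substance of the paper's proof.
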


Figure~\ref{f:synchrony} contains numerical continuation results of the localized synchrony patterns for \eqref{e:fdiss} with boundary conditions \eqref{e:fbc} for the nonlinearities
\begin{equation}\label{e:exnonl}
f(r,\mu,\varepsilon) = \lambda(r,\mu) + \rmi \omega(r,\mu,\varepsilon), \qquad
\lambda(r,\mu)=-\mu+2r^2-r^4, \qquad
\omega(r,\mu,\varepsilon)=0,
\end{equation}
which satisfy Hypotheses~\ref{h:1} and~\ref{h:2}, with coupling strength $\varepsilon=0.01$ for $N=10$ nodes.


\subsection{Isola branches for conservative coupling}\label{s:2.3}

Next, we discuss conservative coupling with $c=\rmi$ for which \eqref{e:falg} becomes
\begin{equation}\label{e:fcons}
0 = \lambda(r_n,\mu) r_n + \rmi(\omega(r_n,\mu,\varepsilon)-\rho) r_n + \rmi\varepsilon \left( r_{n+1}\rme^{\rmi\phi_n} - 2r_n + r_{n-1}\rme^{-\rmi\phi_{n-1}}\right), \quad 1\leq n\leq N
\end{equation}
and assume that the nonlinearities $\lambda$ and $\omega$ satisfy Hypotheses~\ref{h:1} and~\ref{h:2}. We consider on-site localized synchrony patterns of \eqref{e:fcons} with the boundary conditions \eqref{e:fbc} for $0<\varepsilon\ll1$. For each $k$ with $1\leq k\leq N-2$, we define the isola branch
\begin{eqnarray*}
\Gamma_k & := &
\left\{ (\boldsymbol{r}, \boldsymbol{\phi}, \rho, \mu) = \left(
(\underbrace{R_+(s),\ldots,R_+(s)}_{1\leq j\leq k}, R_0(s),0,0,\ldots),
(\textstyle\underbrace{\textstyle-\frac\pi2,\ldots,-\frac\pi2}_{1\leq j\leq k},\frac\pi2,\ldots),
\omega_0(\mu_*(s)), \mu_*(s) \right) \right\} \\ & \bigcup &
\left\{ (\boldsymbol{r}, \boldsymbol{\phi}, \rho, \mu) = \left(
(\underbrace{R_+(s),\ldots,R_+(s)}_{1\leq j\leq k}, R_0(2-s),R_-(s),0,\ldots),
(\textstyle\underbrace{\textstyle-\frac\pi2,\ldots,-\frac\pi2}_{1\leq j\leq k},\frac\pi2,\ldots),
\omega_0(\mu_*(s)), \mu_*(s) \right) \right\}.
\end{eqnarray*}
We then have the following non-rigorous statement.

\begin{figure}
\centering
\includegraphics[scale=0.9]{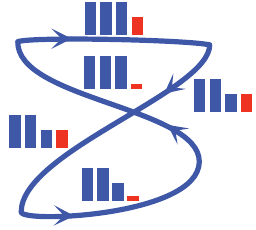}
\caption{We illustrate the isola branch $\Gamma_k$ for $k=2$. The colors indicate the phase difference between adjacent nodes with blue and red corresponding to $\phi_n=-\frac\pi2$ and $\phi_n=\frac\pi2$, respectively. In particular, the phase difference between consecutive nodes along the isola is always $-\frac\pi2$ except for the last node at $n=k+2$ which has phase difference $\frac\pi2$ with the node at $n=k+1$.}
\label{f:isola}
\end{figure}

\begin{stat}\label{t:2}
Assume that $\lambda$ and $\omega$ satisfy Hypotheses~\ref{h:1} and \ref{h:2}, respectively. For each $N$, there exists a constant $\varepsilon_*>0$ so that the following is true for each $\varepsilon\in(0,\varepsilon_*)$. For each $k$ with $1\leq k\leq N-2$, there is a unique smooth isola branch of on-site localized synchrony patterns of \eqref{e:fcons} with boundary conditions \eqref{e:fbc} that is close to the isola branch $\Gamma_k$ and converges in $C^0$ to $\Gamma_k$ as $\varepsilon\to0$ (and in $C^m$ away from $\mu=0,1$).
\end{stat}

Our results are illustrated in Figure~\ref{f:isola}. The results are not rigorous as we will only consider the equations for oscillatory nodes and do not solve the equations for nodes at the stationary state $r=0$.


\subsection{Failure of localized synchronization for oscillators with frequency mismatch}\label{s:2.4}

Consider the system \eqref{e:fdiss} with dissipative coupling and assume that $\lambda$ satisfies Hypothesis~\ref{h:1}. Our next result shows localized synchrony patterns that contain nodes set at $r_-$ and at $r_+$ cannot exist if the frequencies of the oscillations at $r_\pm$ differ.

\begin{lem}\label{l:mis}
Consider \eqref{e:fdiss} and assume that $\lambda$ satisfies Hypothesis~\ref{h:1}.
\begin{compactenum}[(i)]
\item Assume that there is a $\mu\in(0,1)$ with $\omega(r_-(\mu),\mu,0)\neq\omega(r_+(\mu),\mu,0)$, then there is a $\varepsilon_*>0$ so that \eqref{e:fdiss} cannot have a solution $\boldsymbol{r}$ for which there are indices $i,j$ with $r_i=r_+(\mu)$ and $r_j=r_-(\mu)$ for any $\varepsilon\in(0,\varepsilon_*)$. The same argument applies to each closed interval $I$ of $(0,1)$ for which $\omega(r_-(\mu),\mu,0)\neq\omega(r_+(\mu),\mu,0)$ uniformly in $\mu\in I$.
\item Assume that $\omega(r,\mu,\varepsilon)=\omega_0(\mu)+\varepsilon\omega_1(r,\mu,\varepsilon)$. If there is a $\mu\in(0,1)$ with
\[
|\omega_1(r_-(\mu),\mu,0)-\omega_1(r_+(\mu),\mu,0)| > \frac{r_+(\mu)}{r_-(\mu)},
\]
then there are constants $k_*\geq0$ and $\varepsilon_*>0$ so that \eqref{e:fdiss} cannot have solutions near
\[
\boldsymbol{r}=(r_+(\mu),\ldots,r_+(\mu),r_-(\mu),0,\ldots,0) \in \R^N
\]
that starts with $k$ contiguous values of $r_+(\mu)$ for any $k\geq k_*$ and any $\varepsilon\in(0,\varepsilon_*)$.
\end{compactenum}
\end{lem}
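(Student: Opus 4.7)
The plan for both parts of Lemma~\ref{l:mis} is to exploit only the imaginary parts of the amplitude equations \eqref{e:fdiss} at oscillatory nodes, which pin down the common rotation frequency $\rho$ and, in case~(ii), force a linear recursion on $\sin\phi_n$ whose solution eventually must leave $[-1,1]$. Taking the imaginary part of \eqref{e:fdiss} yields at each node
\begin{equation*}
(\omega(r_n,\mu,\varepsilon)-\rho)\,r_n + \varepsilon\bigl(r_{n+1}\sin\phi_n - r_{n-1}\sin\phi_{n-1}\bigr) = 0.
\end{equation*}
For part~(i), I would evaluate this identity at the two indices $i$ and $j$ with $r_i=r_+(\mu)$ and $r_j=r_-(\mu)$, use the uniform a priori bound $\max_n r_n\le C$ (which follows from Hypothesis~\ref{h:1}, since a node exceeding $r_+(\mu)$ would unbalance the real part whose leading term $\lambda(r_n,\mu)r_n$ is $O(1)$ against an $O(\varepsilon)$ coupling), and divide through by $r_\pm(\mu)$. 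This gives $\rho=\omega(r_+(\mu),\mu,\varepsilon)+O(\varepsilon)$ at node $i$ and $\rho=\omega(r_-(\mu),\mu,\varepsilon)+O(\varepsilon)$ at node $j$; subtracting and letting $\varepsilon\to 0$ contradicts $\omega(r_-(\mu),\mu,0)\neq\omega(r_+(\mu),\mu,0)$. The argument is uniform on a closed interval $I\subset(0,1)$ because every constant involved depends continuously on $\mu$ and the gap has a positive infimum on $I$.

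In part~(ii) the leading-order frequencies coincide, so I would rescale the detuning by setting $\hat\rho:=(\rho-\omega_0(\mu))/\varepsilon$ and divide each imaginary equation by $\varepsilon r_n>0$. At leading order in $\varepsilon$, every interior $r_+$-cluster node ($2\le n\le k$) gives the linear recursion $\sin\phi_n-\sin\phi_{n-1}=\hat\rho-\omega_1(r_+(\mu),\mu,0)$; the on-site ($\phi_0=-\phi_1$) or off-site ($\phi_0=0$) condition at $n=0$ fixes $\sin\phi_1$ as an explicit multiple of $\hat\rho-\omega_1(r_+(\mu),\mu,0)$ (by $\tfrac{1}{2}$ or $1$, respectively); and the equation at the $r_-$ node $n=k+1$, where the right neighbor $r_{k+2}$ vanishes, closes the system via
\begin{equation*}
\hat\rho - \omega_1(r_-(\mu),\mu,0) = -\frac{r_+(\mu)}{r_-(\mu)}\,\sin\phi_k.
\end{equation*}
Solving this affine system and eliminating $\hat\rho$ yields a closed-form expression for $\sin\phi_k$ that is monotone in $k$ and tends to $(r_-(\mu)/r_+(\mu))\bigl(\omega_1(r_-(\mu),\mu,0)-\omega_1(r_+(\mu),\mu,0)\bigr)$ as $k\to\infty$. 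Under the hypothesis $|\omega_1(r_-)-\omega_1(r_+)|>r_+/r_-$, this limit has modulus strictly greater than one, so there is a threshold $k_*$ (necessarily bounded by $N-2$) past which the leading-order $|\sin\phi_k|$ exceeds $1+\delta$ for some $\delta>0$.

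To conclude, I would promote the leading-order obstruction to a genuine nonexistence statement for small but nonzero $\varepsilon$. Since $k\le N-2$ is bounded, the corrections $r_n-r_\pm(\mu)$, higher-order pieces of $\omega_1$, and the residual coupling at the $r_-$/zero interface are uniformly $O(\varepsilon)$; absorbing them into a single remainder and choosing $\varepsilon_*$ small enough keeps $|\sin\phi_k|>1$ after perturbation, which contradicts $\sin\phi_k\in[-1,1]$. The main anticipated obstacle is bookkeeping these perturbations cleanly right at the boundary of the feasible region $\{|\sin\phi_n|\le 1\}$; because the lattice is finite and the hypothesis is a strict inequality, however, this reduces to a routine finite-dimensional perturbation argument rather than a genuinely analytic difficulty.
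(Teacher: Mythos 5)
Your proposal is correct and follows essentially the same route as the paper: part~(i) extracts two incompatible values of $\rho$ from the imaginary parts at the $r_+$ and $r_-$ nodes, and part~(ii) uses the rescaled phase equations (the paper telescopes the equations for $n=1,\ldots,k$ rather than solving the affine system explicitly) to derive $\sin\phi_k=\frac{r_-(\mu)}{r_+(\mu)}\bigl(\omega_1(r_-(\mu),\mu,0)-\omega_1(r_+(\mu),\mu,0)\bigr)+\mathcal{O}(1/k)$, which violates $|\sin\phi_k|\leq1$ for large $k$. The only quibble is your parenthetical claim that $k_*$ is necessarily bounded by $N-2$, which is unjustified (and unneeded), since $k_*$ depends on how close $|\omega_1(r_-(\mu),\mu,0)-\omega_1(r_+(\mu),\mu,0)|$ is to $r_+(\mu)/r_-(\mu)$.
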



\subsection{Application to a chain of mechanical oscillators}\label{s:2.5}

The authors of \cite{papangelo2017snaking} investigated localized synchronized oscillations in weakly-coupled mechanical oscillators modeled by the second-order system \cite[Equations~(3)-(4)]{papangelo2017snaking}
\begin{equation}\label{e:papa}
\ddot{x}_n + 2\mu\dot{x}_n - \frac{12\pi^2}{5} \dot{x}_n^3 + \frac{16\pi^4}{5} \dot{x}_n^5 + x_n - \varepsilon (x_{n+1}-2x_n+x_{n-1}) = 0,
\quad 1\leq n\leq N,
\end{equation}
where the $N$ nodes are arranged on a ring with periodic boundary conditions, and where $\mu$ is the designated bifurcation parameter. In \cite{papangelo2017snaking}, the authors then substituted the Fourier ansatz $x_n(t)=a_n\cos(\rho t)+b_n\sin(\rho t)$ into \eqref{e:papa} and, upon projecting onto the single harmonic Fourier series, obtained in \cite[Equation~(8)]{papangelo2017snaking} the harmonic-balance model (HBM) of the form \eqref{e:alg} with $z_n=a_n+\rmi b_n$ and $c=\rmi$ for the nonlinearity 
\begin{equation}\label{e:hbm}
f(r,\mu) = -\left(\frac{12\pi^2\rho^5}{8} r^4 - \frac{12\pi^4\rho^3}{5} r^2 + 2\rho\mu \right) + \rmi \left(1-\rho^2\right),
\end{equation}
which satisfies Hypotheses~\ref{h:1} and~\ref{h:2}.

Our non-rigorous results in \S\ref{s:2.3} therefore apply and indicate that the HBM model should exhibit localized synchrony patterns on isolas, and this is consistent with the numerical simulations conducted in \cite[Figures~6, 8, and~9]{papangelo2017snaking}. As indicated in the imaginary part of the nonlinearity in \eqref{e:hbm}, the localized synchrony patterns in the harmonic-balance model necessarily have frequency $\rho=1$ regardless of their amplitude.

\begin{figure}
\centering
\includegraphics[scale=0.9]{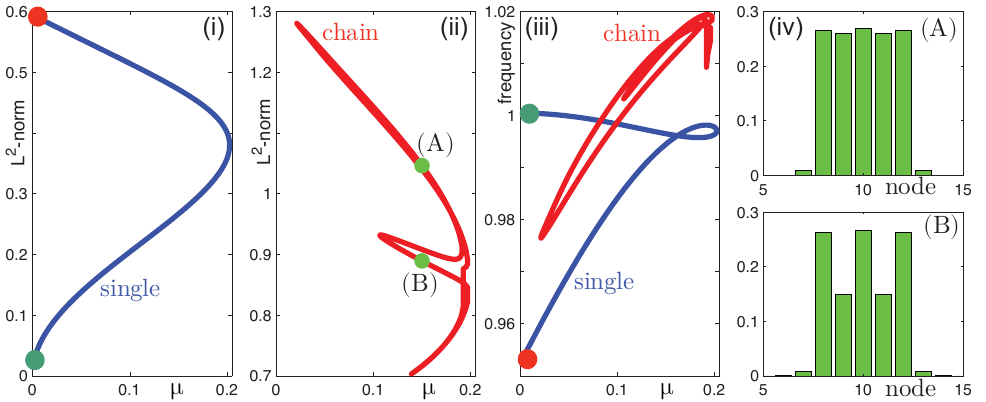}
\caption{Panels~(i) and~(ii) contain numerical continuation results for, respectively, a single oscillator and a chain of $N=20$ oscillators for \eqref{e:papa} with $\varepsilon=0.01$. The frequencies of these solutions are shown in panel~(iii). Panel~(iv) contains two localized synchrony patterns along the branch shown in panel~(ii), and we note that the nodes not shown have $x_n=0$ within computing accuracy.}
\label{f:chain}
\end{figure}

Figure~\ref{f:chain} contains numerical continuation results for single oscillators and localized synchrony patterns for the full model \eqref{e:papa}. As indicated in Figure~\ref{f:chain}(iii), the frequencies of the stable and unstable oscillations are close but not identical. Our results in \S\ref{s:2.4} imply that localized synchronous patterns may not exist on branches that span the interval from Hopf to fold bifurcations, and the continuation results for \eqref{e:papa} shown in Figure~\ref{f:chain}(ii) indeed reveal a very complex isola branch that is not close to the stack of isolas shown in Figure~\ref{f:synchrony}(ii) for the HBM approximation of \eqref{e:papa}.


\section{Dissipative coupling: proofs of main results}\label{s:3}

We prove Theorem~\ref{t:1} and Lemma~\ref{l:mis} for dissipative coupling. In \S\ref{s:3.1}, we will separate real and imaginary parts of \eqref{e:fdiss} and simplify the resulting system using Hypothesis~\ref{h:2}. Sections~\ref{s:3.2}-\ref{s:3.4} then focus on the proof of Theorem~\ref{t:1}, which we divide into the analysis of, respectively, the bistability region, the fold bifurcation regime near $\mu=1$, and the Hopf bifurcation regime near $\mu=0$. Section~\ref{s:3.5} contains the proof of Lemma~\ref{l:mis}.


\subsection{Preparations}\label{s:3.1}

We consider \eqref{e:fdiss}
\[
0 = \lambda(r_n,\mu) r_n + \rmi(\omega(r_n,\mu,\varepsilon)-\rho) r_n + \varepsilon \left( r_{n+1}\rme^{\rmi\phi_n} - 2r_n + r_{n-1}\rme^{-\rmi\phi_{n-1}}\right), \quad 1\leq n\leq N.
\]
Separating the real and imaginary parts of this equation gives the system
\begin{equation}\label{e:sep}
\begin{array}{rcl}
0 & = & \lambda(r_n,\mu) r_n + \varepsilon \left( r_{n+1}\cos\phi_n - 2r_n + r_{n-1}\cos\phi_{n-1} \right) \\
0 & = & (\omega(r_n,\mu,\varepsilon)-\rho) r_n + \varepsilon \left( r_{n+1}\sin\phi_n - r_{n-1}\sin\phi_{n-1} \right),
\end{array}
\end{equation}
where $1\leq n\leq N$. Hypothesis~\ref{h:2} shows that $\omega(r,\mu,\varepsilon)=\omega_0(\mu)+\varepsilon^2\omega_2(r,\mu,\varepsilon)$. Setting $\rho=\omega_0(\mu)-\varepsilon\Omega$ and substituting the expressions for $\rho$ and $\omega(r,\mu,\varepsilon)$ into \eqref{e:sep}, we arrive at the system
\begin{equation}\label{e:disspol}
\begin{array}{rclll}
0 & = & \lambda(r_n,\mu) r_n + \varepsilon \left( r_{n+1}\cos\phi_n - 2r_n + r_{n-1}\cos\phi_{n-1} \right),
& & 1\leq n\leq N \\
0 & = & (\Omega+\varepsilon \omega_2(r_n,\mu,\varepsilon)) r_n + r_{n+1}\sin\phi_n - r_{n-1}\sin\phi_{n-1},
& & 1\leq n\leq N
\end{array}
\end{equation}
together with the boundary conditions \eqref{e:fbc}.


\subsection{Analysis inside the bistability region}\label{s:3.2}

We analyze (\ref{e:disspol}) inside the hyperbolic bistability region, that is, for values of $\mu$ away from $0$ and $1$. For each fixed $1\leq k<N$, we will seek localized solutions of (\ref{e:disspol}) in the form
\begin{equation}\label{in-phase ansatz}
r_n = \left\{
\begin{array}{lcl}
r^0_n(\mu)+\varepsilon \sigma_n & & 1\leq n\leq k\\ \displaystyle
\left[\frac{\varepsilon}{\lambda(0,\mu)}\right]^{n-k}r_k^0(\mu)\sigma_n & & k+1\leq n\leq N,
\end{array} \right.
\end{equation}
where $r^0_n(\mu)$ lies in $\{r_-(\mu),r_+(\mu)\}$ for each $1\leq n\leq k$. In particular, we decompose localized solutions into the \textbf{core} nodes for $1\leq n\leq k$, where they attain the amplitudes $r_\pm(\mu)$ of one of the two nonzero periodic orbits and the \textbf{far-field} nodes for $n>k$, where they are close to the equilibrium $r=0$. Before substituting our ansatz into the Ginzburg--Landau system, we introduce the following definition.

\begin{defn}\label{def:Uniform Order}
Let $\Lambda\subset\Z$ be an index set. We say that a family of functions $(f_n(\boldsymbol{\sigma},\boldsymbol{\phi},\varepsilon))_{n \in \Lambda}$ is \textbf{uniformly of order $\varepsilon$} if the following conditions are met.
\begin{compactitem}
\item There exists a $k\in \mathbb{N}$ such that $f_n$ depends at most on $\varepsilon$, $(\sigma_{n-k},\sigma_{n-k+1},\dots,\sigma_{n+k-1},\sigma_{n+k})$, and $(\phi_{n-k},\phi_{n-k+1},\dots,\phi_{n+k-1},\phi_{n+k})$ for each $n \in \Lambda$.
\item For each $\delta>0$ there exists a $C>0$ such that $|f_n|+|D_{(\boldsymbol{\sigma},\boldsymbol{\phi})}f_n|\leq C\varepsilon$ for all $n\in\Lambda$ whenever $|\sigma|+|\Phi|\leq \delta$.
\end{compactitem}
We shall denote a member of such a family of functions by the shorthand $\mathcal{O}^\mathrm{u}(\varepsilon)$.
\end{defn}

With this definition in hand, substituting \eqref{in-phase ansatz} into (\ref{e:disspol}) and dividing by appropriate powers of $\varepsilon$  yields the amplitude equations
\[
\begin{array}{ll}
0 = (\lambda(r^0_n(\mu),\mu)+r^0_n(\mu)\lambda_r(r^0_n(\mu),\mu))\sigma_n & \\
\qquad+[r^0_{n+1}(\mu)\cos\phi_n+r^0_{n-1}(\mu)\cos\phi_{n-1}-2r^0_n(\mu)] + \mathcal{O}^\mathrm{u}(\varepsilon) & 1 \leq n \leq k-1\\
0 = (\lambda(r^0_k(\mu),\mu)+r^0_k(\mu)\lambda_r(r^0_k(\mu),\mu))\sigma_k+[r^0_{k-1}(\mu)\cos\phi_{k-1}-2r^0_k(\mu)]+\mathcal{O}^\mathrm{u}(\varepsilon) & n=k \\
0 = \sigma_{k+1}+r_k^0(\mu)\cos\phi_k+ \mathcal{O}^\mathrm{u}(\varepsilon) & n=k+1 \\
0 = \sigma_n+\sigma_{n-1}\cos\phi_{n-1}+\mathcal{O}^\mathrm{u}(\varepsilon) & k+2 \leq n \leq N
\end{array}
\]
and the phase equations
\[
\begin{array}{ll}
0 = r^0_n(\mu)\Omega+r^0_{n+1}(\mu)\sin\phi_n-r^0_{n-1}(\mu)\sin\phi_{n-1} + \mathcal{O}^\mathrm{u}(\varepsilon) & 1 \leq n \leq k-1\\
0 = r^0_k(\mu)\Omega-r^0_{k-1}(\mu)\sin\phi_{k-1} + \mathcal{O}^\mathrm{u}(\varepsilon) & n=k\\
0 = -r^0_k\sin\phi_k + \mathcal{O}^\mathrm{u}(\varepsilon) & n=k+1\\
0 = -\sigma_{n-1}\sin\phi_{n-1}+\mathcal{O}^\mathrm{u}(\varepsilon) & k+2 \leq n \leq N.
\end{array}
\]
Instead of distinguishing amplitude and phase equations, we separate the core from the far-field nodes. The core system is given by
\begin{equation}\label{e:cc}
\begin{array}{ll}
0 = (\lambda(r^0_n(\mu),\mu)+r^0_n(\mu)\lambda_r(r^0_n(\mu),\mu))\sigma_n & \\
\qquad+[r^0_{n+1}(\mu)\cos\phi_n+r^0_{n-1}(\mu)\cos\phi_{n-1}-2r^0_n(\mu)] + \mathcal{O}^\mathrm{u}(\varepsilon) & 1\leq n \leq k-1\\
0 = (\lambda(r^0_k(\mu),\mu)+r^0_k(\mu)\lambda_r(r^0_k(\mu),\mu))\sigma_k+
[r^0_{k-1}(\mu)\cos\phi_{k-1}-2r^0_k(\mu)]+\mathcal{O}^\mathrm{u}(\varepsilon) & n=k \\
0 = r^0_n(\mu)\Omega+r^0_{n+1}(\mu)\sin\phi_n-r^0_{n-1}(\mu)\sin\phi_{n-1} + \mathcal{O}^\mathrm{u}(\varepsilon) & 1 \leq n \leq k-1\\
0 = r^0_k(\mu)\Omega-r^0_{k-1}(\mu)\sin\phi_{k-1} + \mathcal{O}^\mathrm{u}(\varepsilon) & n=k,
\end{array}
\end{equation}
and the far-field system is given by
\begin{equation}\label{e:ff}
\begin{array}{ll}
0=\sigma_{k+1}+\cos\phi_k+ \mathcal{O}^\mathrm{u}(\varepsilon) & n=k+1\\
0=\sigma_n+\sigma_{n-1}\cos\phi_{n-1}+\mathcal{O}^\mathrm{u}(\varepsilon) & k+2 \leq n\leq N\\
0=-\sin\phi_k + \mathcal{O}^\mathrm{u}(\varepsilon) & n=k+1\\
0=-\sigma_{n-1}\sin\phi_{n-1}+\mathcal{O}^\mathrm{u}(\varepsilon) & k+2 \leq n \leq N.
\end{array}
\end{equation}
We introduce the core and far-field variables via
\[
\boldsymbol{\sigma}^\mathrm{c} = (\sigma_n)_{1\leq n\leq k}, \qquad
\boldsymbol{\phi}^\mathrm{c}   = (\phi_n)_{1\leq n\leq k-1}, \qquad
\boldsymbol{\sigma}^\mathrm{f} = (\sigma_n)_{k+1\leq n\leq N}, \qquad
\boldsymbol{\phi}^\mathrm{f}   = (\phi_n)_{k\leq n\leq N-1}.
\]
Note that the far-field system depends on $a:=(\sigma_k,\phi_{k-1})$ through the $\mathcal{O}^\mathrm{u}(\varepsilon)$ terms but not on any other variables from the core system. Similarly, the core system depends only on $b:=(\sigma_{k+1},\phi_k)$ from the far-field variables through the $\mathcal{O}^\mathrm{u}(\varepsilon)$ terms. We can therefore write the far-field equation \eqref{e:ff} as $F^\mathrm{f}(\boldsymbol{\sigma}^\mathrm{f},\boldsymbol{\phi}^\mathrm{f},a,\varepsilon)=0$ and the core system \eqref{e:cc} as $F^\mathrm{c}(\boldsymbol{\sigma}^\mathrm{c},\boldsymbol{\phi}^\mathrm{c},\Omega,b,\varepsilon)=0$.


\paragraph{Solving the far-field equations.}

The following lemma is based on the implicit function theorem and allows us to solve the far-field system \eqref{e:ff} uniquely as a function of $a=(\sigma_k,\boldsymbol{\phi}_{k-1})$.

\begin{lem}[{\cite[Lemma~4.3.2]{bergland}}]
\label{far-field}
Consider the system
\begin{equation}\label{e:ffs}
\begin{array}{ll}
0=\gamma_1\sigma_{k+1}+\gamma_2\cos\phi_k+ \mathcal{O}^\mathrm{u}(\varepsilon) & n=k+1\\
0=\gamma_3\sigma_n+\gamma_4\sigma_{n-1}\cos\phi_{n-1}+\mathcal{O}^\mathrm{u}(\varepsilon) & k+2 \leq n\leq N\\
0=\gamma_5\sin\phi_k + \mathcal{O}^\mathrm{u}(\varepsilon) & n=k+1\\
0=\gamma_6\sigma_{n-1}\sin\phi_{n-1}+\mathcal{O}^\mathrm{u}(\varepsilon) & k+2 \leq n \leq N
\end{array}
\end{equation}
where $\boldsymbol{\gamma}=(\gamma_1,...,\gamma_6)$ consists of nonzero constants, and the error terms of the equations for index $k+1$ may depend on $a=(\sigma_k,\phi_{k-1})$. For $\varepsilon=0$, \eqref{e:ffs} has the solution $(\boldsymbol{\sigma}_0(\boldsymbol{\gamma}),\boldsymbol{\phi}_0,\varepsilon_0)=(\hat{\sigma}(\boldsymbol{\gamma}),\boldsymbol{0},0)$ for each value of $a$, where
\begin{equation*}
\begin{array}{ll}
\hat{\sigma}_{k+1}=-\frac{\gamma_2}{\gamma_1} & n=k+1\\
\hat{\sigma}_n=-\frac{\gamma_2}{\gamma_1}(-\frac{\gamma_4}{\gamma_3})^{n-k-1} & k+2 \leq n \leq N
\end{array}
\end{equation*}
depends smoothly on $\boldsymbol{\gamma}$. Given an open bounded set $A$ in $\mathbb{R}^2$, there exists an $\varepsilon_0>0$ so that \eqref{e:ffs} has a unique solution $(\boldsymbol{\sigma},\boldsymbol{\phi})$ near $(\boldsymbol{\sigma}_0(\boldsymbol{\gamma}),\boldsymbol{\phi}_0)$ for each $0\leq\varepsilon<\varepsilon_0$, $a\in A$, and $\boldsymbol{\gamma}$, and this solution depends smoothly on $(\varepsilon,a,\boldsymbol{\gamma})$ and is $\mathcal{O}^\mathrm{u}(\varepsilon)$-close to the solution $(\boldsymbol{\sigma}_0(\boldsymbol{\gamma}),\boldsymbol{\phi}_0)$.
\end{lem}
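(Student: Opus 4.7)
The plan is a direct application of the implicit function theorem to the map $F^\mathrm{f}(\boldsymbol{\sigma}^\mathrm{f}, \boldsymbol{\phi}^\mathrm{f}, a, \varepsilon, \boldsymbol{\gamma})$ whose zero set coincides with the solution set of \eqref{e:ffs}, viewed as a smooth map from $\mathbb{R}^{2(N-k)}$ to $\mathbb{R}^{2(N-k)}$ parameterized by $(a, \varepsilon, \boldsymbol{\gamma})$. First I would verify by direct substitution that $(\hat{\sigma}(\boldsymbol{\gamma}), \boldsymbol{0})$ solves \eqref{e:ffs} when $\varepsilon = 0$: the $n = k+1$ amplitude equation reduces to $\gamma_1 \hat{\sigma}_{k+1} + \gamma_2 = 0$, the amplitude equations for $k+2 \leq n \leq N$ become the recursion $\hat{\sigma}_n = -(\gamma_4/\gamma_3) \hat{\sigma}_{n-1}$, and every phase equation vanishes identically at $\boldsymbol{\phi} = \boldsymbol{0}$.

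The main computation is the Jacobian $D_{(\boldsymbol{\sigma}^\mathrm{f}, \boldsymbol{\phi}^\mathrm{f})} F^\mathrm{f}$ at $(\hat{\sigma}(\boldsymbol{\gamma}), \boldsymbol{0}, a, 0, \boldsymbol{\gamma})$. The key observation is that all cross-derivatives between amplitude and phase blocks vanish at $\boldsymbol{\phi} = \boldsymbol{0}$, since differentiating $\cos\phi_j$ with respect to $\phi_j$ produces $-\sin\phi_j = 0$ and differentiating $\sigma_j \sin\phi_j$ with respect to $\sigma_j$ produces $\sin\phi_j = 0$. Hence the Jacobian is block diagonal: the amplitude block, ordered by $n = k+1, \ldots, N$, is lower bidiagonal with diagonal entries $(\gamma_1, \gamma_3, \ldots, \gamma_3)$ and subdiagonal entry $\gamma_4$, so its determinant equals $\gamma_1 \gamma_3^{N-k-1} \neq 0$; the phase block, ordered by $\phi_k, \phi_{k+1}, \ldots, \phi_{N-1}$, is diagonal with entries $(\gamma_5, \gamma_6 \hat{\sigma}_{k+1}, \ldots, \gamma_6 \hat{\sigma}_{N-1})$, all nonzero because the closed form $\hat{\sigma}_n = -(\gamma_2/\gamma_1)(-\gamma_4/\gamma_3)^{n-k-1}$ is nonzero whenever all $\gamma_i$ are nonzero.

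Invertibility of this Jacobian then allows me to invoke the IFT to obtain, on a neighborhood of $\varepsilon = 0$, a unique smooth branch of solutions $(\boldsymbol{\sigma}(\varepsilon, a, \boldsymbol{\gamma}), \boldsymbol{\phi}(\varepsilon, a, \boldsymbol{\gamma}))$ that reduces to $(\hat{\sigma}(\boldsymbol{\gamma}), \boldsymbol{0})$ at $\varepsilon = 0$. The $\mathcal{O}^\mathrm{u}(\varepsilon)$-closeness claim then follows from the quantitative form of the IFT: since $F^\mathrm{f}(\hat{\sigma}(\boldsymbol{\gamma}), \boldsymbol{0}, a, \varepsilon, \boldsymbol{\gamma}) = \mathcal{O}^\mathrm{u}(\varepsilon)$ by Definition~\ref{def:Uniform Order} and the inverse Jacobian is uniformly bounded, the Newton correction moves the zero by $\mathcal{O}^\mathrm{u}(\varepsilon)$, and the finite-range dependence on neighboring variables required by the definition of $\mathcal{O}^\mathrm{u}$ is inherited from the input equations.

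The main obstacle, which is more bookkeeping than substance, is establishing uniformity of the IFT constants over the parameters $(a, \boldsymbol{\gamma})$. Since $A$ is open and bounded and all $\gamma_i$ are nonzero, I would restrict attention to a compact set on which each $|\gamma_i|$ is bounded below by a positive constant; the Jacobian's determinant then admits a uniform positive lower bound from the explicit formulas above, and the quantitative IFT produces a single $\varepsilon_0 > 0$ that works uniformly in $(a, \boldsymbol{\gamma})$, with smooth dependence on all parameters.
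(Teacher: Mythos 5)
Your proposal is correct and takes essentially the same route as the paper, which states only that the lemma ``is based on the implicit function theorem'' and defers the details to \cite[Lemma~4.3.2]{bergland}: verifying the explicit $\varepsilon=0$ solution and computing the block-diagonal Jacobian (bidiagonal amplitude block with determinant $\gamma_1\gamma_3^{N-k-1}$ and diagonal phase block with entries $\gamma_5,\gamma_6\hat{\sigma}_{k+1},\ldots,\gamma_6\hat{\sigma}_{N-1}$, all nonzero since every $\hat{\sigma}_n\neq0$) is exactly the required argument. Your observation that uniformity of $\varepsilon_0$ in $\boldsymbol{\gamma}$ only makes sense after restricting to a compact set on which the $|\gamma_i|$ are bounded below is the right reading of the statement, and the quantitative IFT step correctly delivers the $\mathcal{O}^\mathrm{u}(\varepsilon)$-closeness.
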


We can apply Lemma~\ref{far-field} to solve \eqref{e:ff} by taking $\gamma=(1,1,1,1,-1,-1)$, which yields a unique solution $(\boldsymbol{\sigma}^\mathrm{f},\boldsymbol{\phi}^\mathrm{f})$ as a smooth function of $(a,\varepsilon)$ with
\[
\sigma_n = (-1)^{n-k} + \mathcal{O}^\mathrm{u}(\varepsilon), \qquad k+1 \leq n \leq N,
\]
where the dependence on $(\sigma_k,\boldsymbol{\phi}_{k-1})$ is hidden in the uniformly small remainders. Substituting this solution into the core equation \eqref{e:cc}, we arrive at the system $F^\mathrm{c}(\boldsymbol{\sigma}^\mathrm{c},\boldsymbol{\phi}^\mathrm{c},\Omega,b(\sigma_k,\boldsymbol{\phi}_{k-1},\varepsilon),\varepsilon)=0$, where $b=(\sigma_{k+1},\phi_k)$ is now a function of the core variables $a=(\sigma_k,\phi_{k-1})$. Since the term $b(\sigma_k,\boldsymbol{\phi}_{k-1},\varepsilon)$ enters only through the $\mathcal{O}^\mathrm{u}(\varepsilon)$ terms, we will, with a slight abuse of notation, write the resulting core system that incorporates the solution of the far-field system also as $F^\mathrm{c}(\boldsymbol{\sigma}^\mathrm{c},\boldsymbol{\phi}^\mathrm{c},\Omega,\varepsilon)=0$. We will solve this system next.


\paragraph{Solving the core system.}

Next, we solve the core system \eqref{e:cc}.

\begin{lem}
Pick $1\leq k<N$, then the vector $(\boldsymbol{\sigma}^\mathrm{c},\Omega,\boldsymbol{\phi}^\mathrm{c},\varepsilon)=(\boldsymbol{\hat{\sigma}},0,\boldsymbol{0},0)$ given by
\begin{equation}\label{e:ccsol}
\begin{array}{ll}
\displaystyle \hat{\sigma}_n = \frac{2r^0_n(\mu)-r^0_{n+1}(\mu)-r^0_{n-1}(\mu)}{\lambda(r^0_n(\mu),\mu)+r^0_n(\mu)\lambda_r(r^0_n(\mu),\mu)}, & \quad 1\leq n\leq k-1 \\[5mm]
\displaystyle \hat{\sigma}_k = \frac{2r^0_k(\mu)-r^0_{k-1}(\mu)}{\lambda(r^0_k(\mu),\mu)+r^0_k(\mu)\lambda_r(r^0_k(\mu),\mu)}&
\end{array}
\end{equation}
together with either on-site or off-site boundary conditions from \eqref{e:fbc} satisfies \eqref{e:cc}. Furthermore, the linearization of \eqref{e:disspol}, together with the boundary conditions \eqref{e:fbc}, about this solution is invertible provided that $r^0_n(\mu)\neq 0$ for all $1\leq n\leq k$, and we can therefore continue it smoothly in $\varepsilon$ to nonzero values of $\varepsilon$.
\end{lem}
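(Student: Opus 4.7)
The plan is to verify the leading-order solution, to establish invertibility of the linearization of the core system at this solution by exploiting its block structure, and to conclude via the implicit function theorem that the solution continues smoothly for small $\varepsilon > 0$.

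At $\varepsilon = 0$ the $\mathcal{O}^\mathrm{u}(\varepsilon)$ remainders drop out. Hypothesis~\ref{h:1} gives $\lambda(r^0_n,\mu) = 0$ since $r^0_n \in \{r_-(\mu), r_+(\mu)\}$, so the amplitude equations reduce to $r^0_n\lambda_r(r^0_n,\mu)\sigma_n = 2 r^0_n - r^0_{n+1} - r^0_{n-1}$ (with the analogous equation at $n = k$), which is precisely inverted by~\eqref{e:ccsol} thanks to the stability condition $\lambda_r(r_\pm(\mu),\mu) \neq 0$ from Hypothesis~\ref{h:1}; and the phase equations vanish identically since $\Omega = 0$ and $\sin 0 = 0$. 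The on-site and off-site boundary conditions in~\eqref{e:fbc} are both consistent with $\boldsymbol{\phi}^\mathrm{c} = 0$.

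For the linearization, I would observe that at the base point $\cos\phi_n = 1$, $\sin\phi_n = 0$, and $\Omega = 0$, so all mixed partial derivatives between amplitude and phase equations vanish and the linearized core operator splits into two independent blocks. The amplitude block acts on $\delta\boldsymbol{\sigma}^\mathrm{c}$ alone and is the diagonal matrix $\mathrm{diag}(r^0_n\lambda_r(r^0_n,\mu))_{n=1}^k$, which is invertible because $r^0_n \neq 0$ by assumption and $\lambda_r(r_\pm(\mu),\mu) \neq 0$ by Hypothesis~\ref{h:1}. The phase block is a $k \times k$ matrix acting on $(\delta\Omega,\delta\phi_1,\ldots,\delta\phi_{k-1})$ whose rows encode the linearized phase equations for $1 \leq n \leq k$.

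To show the phase block is invertible, the key step is to multiply the linearized $n$-th phase equation by $r^0_n$ and sum over $n = 1,\ldots,k$. The $\delta\phi_n$ cross-terms telescope, producing an identity of the form $C\,\delta\Omega = 0$, where $C$ is a strictly positive linear combination of the squared amplitudes $(r^0_n)^2$; this forces $\delta\Omega = 0$, after which a triangular back-substitution through the first $k-1$ equations yields $\delta\phi_n = 0$ for every $n$. The on-site and off-site boundary conditions enter only through a boundary term in the telescoping sum and are handled in parallel by using the corresponding $n = 1$ equation to eliminate $\delta\phi_1$. The implicit function theorem applied to $F^\mathrm{c}(\boldsymbol{\sigma}^\mathrm{c},\boldsymbol{\phi}^\mathrm{c},\Omega,\varepsilon) = 0$ then delivers the claimed smooth continuation in $\varepsilon$, uniformly for $\mu$ on any compact subinterval of $(0,1)$ where $r_\pm(\mu)$ remain bounded away from $0$ and from each other. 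The main obstacle I anticipate is the bookkeeping in this telescoping argument: separating the two boundary-condition cases correctly and confirming that in each case the coefficient $C$ multiplying $\delta\Omega$ is strictly positive rather than something that could degenerate.
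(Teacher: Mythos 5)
Your proposal is correct and follows the same overall architecture as the paper: verify the $\varepsilon=0$ solution, observe that the linearization is block diagonal (amplitude block diagonal and invertible, phase block acting on $(\Omega,\phi_1,\ldots,\phi_{k-1})$), and invoke the implicit function theorem. The one genuine difference is how you establish invertibility of the phase block $\boldsymbol{A}_{\rm on/off}$. The paper computes its determinant explicitly by cofactor expansion and induction, obtaining $\det(\boldsymbol{A}_{\rm off})=(-1)^{k+1}\bigl(\prod_{n=1}^{k-1}r^0_n\bigr)\sum_{n=0}^k(r^0_n)^2$ and the analogous on-site formula, and concludes from $r^0_n\neq0$. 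You instead show the kernel is trivial: multiplying the $n$-th linearized phase equation by $r^0_n$ and summing telescopes the cross-terms, leaving $C\,\delta\Omega$ plus a boundary term. I checked the bookkeeping you flagged as the main risk: for off-site conditions the boundary term vanishes and $C=\sum_{n=1}^k(r^0_n)^2>0$; for on-site conditions the leftover term is $r^0_1r^0_2\,\delta\phi_1$, and eliminating $\delta\phi_1$ via the $n=1$ equation ($r^0_1\delta\Omega+2r^0_2\delta\phi_1=0$) gives $C=\sum_{n=1}^k(r^0_n)^2-\tfrac12(r^0_1)^2>0$. So $\delta\Omega=0$ in both cases, and back-substitution starting from the $n=k$ equation forces $\delta\phi_{k-1}=\cdots=\delta\phi_1=0$ using $r^0_n\neq0$. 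Your route is slightly more elementary and makes transparent why the hypothesis $r^0_n\neq0$ and the weighted sum (the natural solvability condition determining the frequency correction $\Omega$) are what matter; the paper's determinant formulas carry more explicit quantitative information but require the inductive cofactor computation. Both arguments are sound.
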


\begin{proof}
The formula \eqref{e:ccsol} can be derived from \eqref{e:cc} upon setting phase lags $\phi_n$, the frequency $\Omega$, and the parameter $\varepsilon$ to zero. It remains to prove the claims about the linearization, which is of the form
\begin{equation*}
D_{(\boldsymbol{\sigma},\Omega,\boldsymbol{\phi})}F^\mathrm{c}(\boldsymbol{\hat{\sigma}},0,\boldsymbol{0},0)=
\begin{pmatrix}
\textbf{D}(\boldsymbol{r^0}) & \boldsymbol{0} \\
\boldsymbol{0} & \boldsymbol{A}_\mathrm{on/off}(\boldsymbol{r^0})
\end{pmatrix},
\end{equation*}
where \textbf{D} is an invertible diagonal matrix with nonzero diagonal entries given by $\textbf{D}_{nn}(\boldsymbol{r^0})=\lambda(r^0_n(\mu),\mu)+r^0_n(\mu)\lambda_r(r^0_n(\mu),\mu)$, and the matrices $\boldsymbol{A}_\mathrm{on/off}$ for the phases encode the boundary conditions for, respectively, on- and off-site solutions through the $(1,2)$-th entry. We need to show that the matrices $\boldsymbol{A}_\mathrm{on/off}$ are both invertible and we do this by computing its determinants via co-factor expansion and an induction argument. We find that
\begin{equation*}
\det(\boldsymbol{A}_{\mathrm{off}}(\boldsymbol{r^0}))=(-1)^{k+1}\left(\prod_{n=1}^{k-1}r^0_n\right) \sum_{n=0}^k(r^0_n)^2
\end{equation*}
and
\begin{equation*}
\det(\boldsymbol{A}_{\mathrm{on}}(\boldsymbol{r^0}))=(-1)^{k+1}\left( (r_0^0)^2+2\sum_{n=1}^k(r^0_n)^2 \right)\prod_{n=1}^{k-1}r_n^0,
\end{equation*}
and refer to \cite[Lemma~4.3.3]{bergland} for the details of this computation. Both determinants are nonzero since we assumed that $r^0_n(\mu)\neq0$ for each $1\leq n\leq k$.
\end{proof}


\subsection{Analysis near the fold bifurcation at $\mu=1$}\label{s:3.3}

Next, we analyze (\ref{e:disspol}) near the rightmost boundary $\mu=1$ of the bistability region where the uncoupled amplitude equation exhibits a fold bifurcation of periodic orbits at $(r,\mu)=(1,1)$ when $\varepsilon=0$. We pick $1\leq k<N$ and are interested in synchrony patterns that, at the fold $\mu=1$, have their first $k$ nodes set to $r=1$ and the remaining $N-k$ nodes set to $r=0$.

We change coordinates near $(r,\mu)$ so that the uncoupled amplitude equation near $(r,\mu)=(1,1)$ is transformed into the normal form
\begin{equation}\label{e:foldnf}
\lambda(r,\mu) r = (1-\mu)-(1-r)^2 + \mathcal{O}(|1-\mu|^2,|1-r|^3).
\end{equation}
Motivated by this expansion, we introduce the ansatz
\begin{equation}\label{e:foldscale}
\varepsilon = \tilde{\varepsilon}\nu^2, \qquad
\mu = 1-\tilde{\mu}\nu^2, \qquad
r_n = \left\{
\begin{array}{lcl}
1 + \nu\sigma_n & & 1\leq n\leq k \\ \displaystyle
\left[ \frac{\tilde{\varepsilon}\nu^2}{\lambda(0,1)}\right]^{n-k}\sigma_n & & k+1 \leq n \leq N.
\end{array} \right.
\end{equation}
Substituting these expressions into (\ref{e:disspol}), dividing out by the highest power of $\nu$ and $\tilde{\varepsilon}$ in each equation, and rearranging the result into core and far-field modes, we obtain the core amplitude system
\begin{equation}\label{e:camp}
\begin{array}{ll}
0 = \tilde{\mu} - \sigma_n^2 + \tilde{\varepsilon} [\cos\phi_n + \cos\phi_{n-1} -2 ] +\mathcal{O}^\mathrm{u}(\nu) & 1 \leq n \leq k-1\\
0 = \tilde{\mu} - \sigma_k^2 + \tilde{\varepsilon} [\cos\phi_{k-1} -2] + \mathcal{O}^\mathrm{u}(\nu) & n=k,
\end{array}
\end{equation}
the core phase system
\begin{equation}\label{e:cphase}
\begin{array}{ll}
0 = \Omega + \sin\phi_n - \sin\phi_{n-1} + \mathcal{O}^\mathrm{u}(\nu) & 1\leq n\leq k-1 \\
0 = \Omega - \sin\phi_{k-1} + \mathcal{O}^\mathrm{u}(\nu) & n=k,
\end{array}
\end{equation}
where $\phi_0:=-\phi_1$ for on-site and $\phi_0:=0$ for off-site solutions, and the far-field system
\begin{equation}\label{e:ff-fold}
\begin{array}{ll}                   
0 = \sigma_{k+1} + \cos\phi_k + \mathcal{O}^\mathrm{u}(\nu) & n=k+1\\
0 = \sigma_n + \sigma_{n-1}\cos\phi_{n-1} + \mathcal{O}^\mathrm{u}(\nu) & k+2 \leq n \leq N\\
0 = -\sin\phi_k + \mathcal{O}^\mathrm{u}(\nu)& n=k+1 \\
0 = -\sigma_{n-1}\sin\phi_{n-1} + \mathcal{O}^\mathrm{u}(\nu) & k+2 \leq n \leq N,
\end{array}
\end{equation}
where the $\mathcal{O}^\mathrm{u}(\nu)$ terms are uniform in $\tilde{\varepsilon}\geq0$. The system \eqref{e:ff-fold} agrees with the far-field equation \eqref{e:ff}. Lemma~\ref{far-field} therefore guarantees that we can solve \eqref{e:ff-fold} uniquely for $(\sigma_n)_{k<n\leq N}$ and $(\phi_n)_{k\leq n<N}$ as smooth functions of $(\sigma_k,\phi_{k-1},\tilde{\varepsilon},\nu)$, and substituting this solution into \eqref{e:camp}-\eqref{e:cphase} does not change the leading-order terms. Next, separately for on- and off-site solutions, we can solve the system \eqref{e:cphase} uniquely for $(\phi_n)_{1\leq n<k}$ and $\Omega$ near $(\phi,\Omega)=0$ as a smooth function of $\nu$. Substituting this solution into the remaining system \eqref{e:camp} and using $(\phi_n)_{k\leq n<N}=\mathcal{O}^\mathrm{u}(\nu)$, we finally arrive at the system
\begin{equation}\label{e:foldbif}
\begin{array}{ll}
0 = \tilde{\mu} - \sigma_n^2 + \mathcal{O}^\mathrm{u}(\nu) & 1 \leq n \leq k-1\\
0 = \tilde{\mu} - \sigma_k^2 - \tilde{\varepsilon} + \mathcal{O}^\mathrm{u}(\nu) & n=k
\end{array}
\end{equation}
for $(\sigma_n)_{1\leq n\leq k}$ and $(\tilde{\mu},\tilde{\varepsilon},\nu)$ with $0\leq\nu\ll1$. We consider \eqref{e:foldbif} separately in the coordinate charts $\tilde{\varepsilon}=1$ and $\tilde{\mu}=2$, and refer to Figure~\ref{f:bifurcations}(i) for an illustration of how roots of \eqref{e:foldbif} disappear and to Figure~\ref{f:bifurcations}(iii) for an illustration of the necessity of using two coordinate charts.

\begin{figure}
\centering
\includegraphics[scale=0.8]{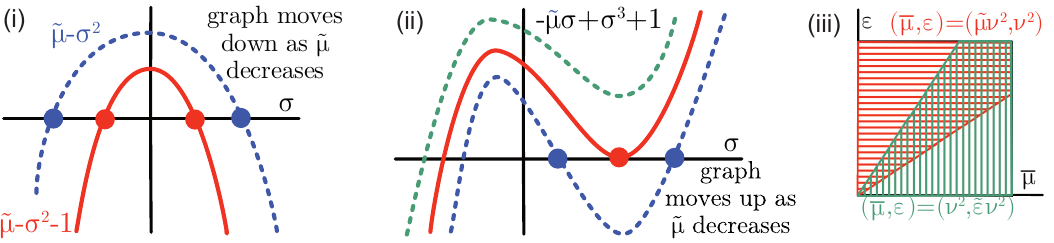}
\caption{Panel~(i) illustrates in which order the roots of the quadratic functions in \eqref{e:foldbif} disappear as $\tilde{\mu}$ decreases. Panel~(ii) indicates how the two positive roots of the cubic function in \eqref{e:finalamp} collide and disappear as $\tilde{\mu}$ decreases. Panel~(iii) indicates how the two charts $\tilde{\varepsilon}=1$ and $\tilde{\mu}=1$ cover the entire relevant $(\mu,\varepsilon)$ region of interest for fold bifurcations, where we set $\bar{\mu}:=1-\mu$ (the case of Hopf bifurcations is similar).}
\label{f:bifurcations}
\end{figure}


\paragraph{Solving in the chart $\tilde{\varepsilon}=1$.}

For $\tilde{\varepsilon}=1$, the system \eqref{e:foldbif} is given by
\begin{equation}\label{e:feps}
\begin{array}{ll}
0 = \tilde{\mu} - \sigma_n^2 + \mathcal{O}^\mathrm{u}(\nu) & 1\leq n\leq k-1\\
0 = \tilde{\mu} - \sigma_k^2 - 1 + \mathcal{O}^\mathrm{u}(\nu) & n=k.
\end{array}
\end{equation}
For $\nu=0$, its solutions are given by
\[
\sigma_k := s, \qquad
\tilde{\mu} = 1+s^2, \qquad
(\sigma_n)_{1\leq n<k} = \sqrt{1+s^2} \qquad
\mbox{ where } |s|\leq1,
\]
and we can solve \eqref{e:feps} uniquely for $((\sigma_n)_{1\leq n<k},\tilde{\mu})$ as smooth functions of $\sigma_k=s\in[-1,1]$ and $0\leq\nu\ll1$. A slight extension of this argument shows that the family of solutions we just constructed exhibits a fold bifurcation for $(\tilde{\mu},\sigma_k)=(\tilde{\mu}^\mathrm{fold},\sigma_k^\mathrm{fold})(\nu)=(1+\mathcal{O}^\mathrm{u}(\nu^2),\mathcal{O}^\mathrm{u}(\nu))$. For the original core amplitudes, we showed that there is a $\delta>0$ so that for 
\begin{equation}\label{e:f1}
\mu = 1 - (1+s^2+\mathcal{O}(\sqrt{\varepsilon})) \varepsilon, \quad
r_k = 1 + s \sqrt{1-\mu} + \mathcal{O}(|1-\mu|), \quad
(r_n)_{1\leq n<k} = 1 + \sqrt{1-\mu} + \mathcal{O}(|1-\mu|)
\end{equation}
for $s\in[-1,1]$ and $0\leq\varepsilon\leq\delta$, which exhibits a fold bifurcation for $s=s^\mathrm{fold}(\varepsilon)=\mathcal{O}(\sqrt{\varepsilon})$.


\paragraph{Solving in the chart $\tilde{\mu}=2$.}

Setting $\tilde{\mu}=1$, the system \eqref{e:foldbif} becomes
\begin{equation}\label{e:fmu}
\begin{array}{ll}
0 = 2 - \sigma_n^2 + \mathcal{O}^\mathrm{u}(\nu) & 1\leq n\leq k-1\\
0 = 2 - \sigma_k^2 - \tilde{\varepsilon} + \mathcal{O}^\mathrm{u}(\nu) & n=k.
\end{array}
\end{equation}
For $\nu=0$, its solutions are given by
\[
\sigma_k := s, \qquad
\tilde{\varepsilon} = 2-s^2, \qquad
(\sigma_n)_{1\leq n<k} = 1 \qquad
\mbox{ where } |s|\leq2,
\]
and we can solve \eqref{e:fmu} uniquely for $((\sigma_n)_{1\leq n<k},\tilde{\varepsilon})=(1,2-s^2)+\mathcal{O}^\mathrm{u}(\nu)$ as smooth functions of $\sigma_k=s\in[-2,2]$ and $0\leq\nu\ll1$. These results provide a description of the solution branch for each fixed $\mu=1-2\nu^2$ as a function of $\varepsilon=\tilde{\varepsilon}\nu^2=\tilde{\varepsilon}(1-\frac{\mu}{2})$ where $\tilde{\varepsilon}=2-s^2$ for $|s|\leq2$. These solutions intersect $\tilde{\varepsilon}=0$ in exactly two unique points at which $s=\sigma_k=\pm\sqrt{2}+\mathcal{O}^\mathrm{u}(\nu)$. Thus, there is a $\delta>0$ so that the original core amplitudes are given by
\begin{equation}\label{e:f2}
\begin{split}
& \varepsilon = \frac{(2-s^2+\mathcal{O}(\sqrt{1-\mu})) (1-\mu)}{2}, \quad
r_k = 1 + \frac{s \sqrt{2(1-\mu)}}{2} + \mathcal{O}(|1-\mu|), \\
& (r_n)_{1\leq n<k} = 1 + \frac{\sqrt{2(1-\mu)}}{2} + \mathcal{O}(|1-\mu|)
\end{split}
\end{equation}
for $s\in[-2,2]$ and $0\leq1-\mu\leq\delta$, where $\varepsilon=0$ precisely when $s=s^\pm(\mu)=\pm\sqrt{2}+\mathcal{O}(\sqrt{|1-\mu|})$.


\paragraph{Combining the results from the two coordinate charts.}

Comparing the expressions in \eqref{e:f1} and \eqref{e:f2} completes the proof of Theorem~\ref{t:1} near $\mu=1$.


\subsection{Analysis near the Hopf bifurcation at $\mu=0$}\label{s:3.4}

The final step is to consider the leftmost boundary $\mu=0$ of the bistability region where the rest state $r=0$ undergoes a Hopf bifurcation when $\varepsilon=0$. We pick $k\geq2$ and analyse synchrony patterns that have their first $k-1$ nodes set to $r_+(0)$ and the remaining $N+1-k$ nodes set to $r=0$. We want to show that these synchrony patterns recruit the node at index $k$.

We rescale our variables so that $r_+(0)=1$ and change coordinates so that the uncoupled oscillator equation near $(r,\mu)=(0,0)$ becomes the normal form
\begin{equation}\label{e:hopfnf}
\lambda(r,\mu) r = -\mu r + r^3 + \mathcal{O}(r^2|\mu|,|r|\mu^2,r^4)
\end{equation}
of a subcritical pitchfork bifurcation. We seek solutions in the form
\begin{equation}\label{e:hopfscale}
\varepsilon = \tilde{\varepsilon}\nu^3, \qquad
\mu = \tilde{\mu}\nu^2, \qquad
r_n = \left\{
\begin{array}{lcl}
1+\sigma_n & & 1\leq n\leq k-1 \\
\nu \sigma_k & & n=k \\ \displaystyle
\left[\frac{\tilde{\varepsilon}}{\tilde{\mu}}\right]^{n-k}\nu^{n-k}\sigma_k\sigma_n & & k+1 \leq n \leq N.
\end{array} \right.
\end{equation}
Substituting this ansatz into (\ref{e:disspol}) and dividing out by the highest powers of $\nu$, $\sigma_k$, $\tilde{\varepsilon}$, and $\tilde{\mu}^{-1}$ in each equation, we obtain the core amplitude equations
\[
\begin{array}{ll}
0 = (1+\sigma_n)\lambda_r(1,0)\sigma_n+\mathcal{O}^\mathrm{u}(\nu) & 1\leq n\leq k-1\\
0 = -\tilde{\mu}\sigma_k+\sigma_k^3+\tilde{\varepsilon}(1+\sigma_{k-1})\cos\phi_{k-1}+\mathcal{O}^u(\nu) & n=k,
\end{array}
\]
the core phase equations
\begin{equation}\label{e:cphases0}
\begin{array}{ll}
0=\Omega+[\sin\phi_n-\sin\phi_{n-1}]+\mathcal{O}^u(\nu) & 1\leq n\leq k-2\\
0=\Omega-\sin\phi_{k-2}+\mathcal{O}^u(\nu) & n=k-1 \\
0=-\sin\phi_{k-1}+\mathcal{O}^u(\nu) & n=k,
\end{array}
\end{equation}
and the far-field system
\begin{equation}\label{e:farfields0}
\begin{array}{ll}
0= -\sigma_{k+1}+\cos\phi_k+\mathcal{O}^u(\nu) & n=k+1\\
0= -\sigma_n+\sigma_{n-1}\cos\phi_{n-1}+\mathcal{O}^u(\nu) & k+2\leq n\leq N\\
0=-\sin\phi_k+\mathcal{O}^u(\nu) & n=k+1\\
0=-\sigma_{n-1}\sin\phi_{n-1}+\mathcal{O}^u(\nu) & k+2 \leq n \leq N.\\
\end{array}
\end{equation}
Via Lemma~\ref{far-field}, we can solve system (\ref{e:farfields0}) in terms of $(\phi_{k-1},\tilde{\varepsilon},\tilde{\mu},\nu)$ and substitute these results into our core systems, without altering them at leading order. Separately for on-site and off-site boundary conditions, we can then solve (\ref{e:cphases0}) near $(\phi,\Omega)=\boldsymbol{0}$ as a smooth function of $\nu$. Substituting into the core amplitude equations yields
\begin{equation}\label{e:camps0copy}
\begin{array}{ll}
0 = (1+\sigma_n)\lambda_r(1,0)\sigma_n+\mathcal{O}^\mathrm{u}(\nu) & 1\leq n\leq k-1\\
0=-\tilde{\mu}\sigma_k+\sigma_k^3+\tilde{\varepsilon}(1+\sigma_{k-1})+\mathcal{O}^u(\nu) & n=k.
\end{array}
\end{equation}
The equations for the modes $1\leq n\leq k-1$ can then be solved near $(\sigma_n)_{1\leq n\leq k-1}=\boldsymbol{0}$. Therefore, $\sigma_{k-1}=\mathcal{O}^u(\nu)$ and we can focus our efforts on the equation
\begin{equation}\label{e:finalamp}
0=-\tilde{\mu}\sigma_k+\sigma_k^3+\tilde{\varepsilon}+\mathcal{O}^u(\nu).
\end{equation}
We refer to Figure~\ref{f:bifurcations}(ii) for an illustration of how positive roots of \eqref{e:finalamp} collide and disappear. We shall consider (\ref{e:finalamp}) separately in the charts $\tilde{\varepsilon}=1$ and $\tilde{\mu}=2$.


\paragraph{Solving in the chart $\tilde{\varepsilon}=1$.}

In this chart, we obtain
\begin{equation}\label{e:camps0copyeps}
0=-\tilde{\mu}\sigma_k+\sigma_k^3+1+\mathcal{O}^{\mathrm{u}}(\nu)
\end{equation}
Which has $(\sigma_k,\tilde{\mu},\nu)=(s,\frac{1+s^3}{s},0)$ as a solution for any positive $s$. We can thus solve (\ref{e:camps0copyeps}) uniquely for $\tilde{\mu}$ as a smooth function of $s$ and $0\leq\nu\ll1$. An extension of this argument demonstrates that this family of solutions also exhibits a fold bifurcation, at
\begin{equation}
(\sigma_k,\tilde{\mu})=(\tilde{\mu}^{\mathrm{fold}},\sigma^{\mathrm{fold}}_k)(\nu)=\left(\frac{1}{\sqrt[3]{2}},{\frac{3}{2}}\sqrt[3]{2}\right)+\mathcal{O}^{\mathrm{u}}(\nu).
\end{equation}
The methodology for computing this solution is precisely the same as the one found near $\mu=1$. We augment \eqref{e:camps0copyeps} by setting the derivative of the right-hand side with respect to $\sigma_k$ equal to zero, which gives
\begin{eqnarray*}
0 & = & -\tilde{\mu}\sigma_k+\sigma_k^3+1+\mathcal{O}^{\mathrm{u}}(\nu) \\
0 & = & -\tilde{\mu}+3\sigma_k^2+\mathcal{O}^{\mathrm{u}}(\nu).
\end{eqnarray*}
This equation has $(\sigma_k,\tilde{\mu})=\left(\frac{1}{\sqrt[3]{2}},{\frac{3}{2}}\sqrt[3]{2}\right)$ as a solution for $\nu=0$, and we can solve it in a neighborhood of this point for $0\leq\nu\ll 1$.

We also wish to show that the solutions computed in this chart connect with those computed in the chart $\tilde{\mu}=2$. To that end, we must show that there exists two values of $s$ such that $\tilde{\mu}(s,\nu)=2$ for each $\nu\geq 0$ small. In other words, we seek two roots of $g(s,2)+\mathcal{O}^{\mathrm{u}}(\nu)$, where $g(s,\tilde{\mu}):=-\tilde{\mu}+s^3+1$. Explicit calculation shows that $g(s,2)$ has two positive real roots $s=s_\pm$ with
\[
s_-=\frac{\sqrt{5}-1}{2}, \qquad
s_+=1.
\]
Furthermore, we note that $g_s(s_{\pm},2)=-2+3s_{\pm}^2\neq0$, which establishes the claim. Returning to our original coordinates, we see that after shrinking the $\delta>0$ from the previous section if necessary that
\begin{equation}\label{e:somethingelse}
\mu = \left(\frac{1+s^3}{s}+\mathcal{O}(\sqrt[3]{\varepsilon})\right) \varepsilon^{2/3}, \quad
r_k = \left(s+\mathcal{O}(\varepsilon^{1/3})\right)\varepsilon^{1/3}
\end{equation}
for $s\in[\frac{1}{2},\frac{3}{2}]$ and $0\leq\varepsilon\leq \delta$, with a fold bifurcation occurring at $s=s^{\mathrm{fold}}=\frac{1}{\sqrt[3]{2}}+\mathcal{O}(\varepsilon^{1/3})$.


\paragraph{Solving in the chart $\tilde{\mu}=2$.}

We now set $\tilde{\mu}=2$, so that (\ref{e:camps0copy}) becomes
\[
0 = -2\sigma_k+\sigma_k^3+\tilde{\varepsilon}+\mathcal{O}^{\mathrm{u}}(\nu)
\]
which has $(\sigma_k,\tilde{\varepsilon},\nu)=(s,s(2-s^2),0)$ as solutions. Thus, we can solve for $\tilde{\varepsilon}$ as a smooth function of $s$ and $0\leq\nu\ll1$. These results represent solutions with fixed $\mu=2\nu^2$ as a function of $\varepsilon=\tilde{\varepsilon}\nu^3=\frac{\tilde{\varepsilon}\mu^{3/2}}{2^{3/2}}$, and they intersect $\tilde{\varepsilon}=0$ in two points: $\sigma_k=0,\sqrt{2}+\mathcal{O}^{\mathrm{u}}(\nu)$.

Therefore, we see that after potentially shrinking the $\delta>0$ found in the section near $\mu=1$,
\begin{equation}\label{e:somethingelse2}
\varepsilon=\frac{\left(s(2-s^2)+\mathcal{O}(\sqrt{\mu})\right)\mu^{3/2}}{2^{3/2}}, \quad r_k=\frac{\left(s+\mathcal{O}(\sqrt{\mu})\right)\sqrt{2\mu}}{2}
\end{equation}
for $s\in[-2,2]$ and $0\leq\mu<\delta$. Finally, we have that $\varepsilon=0$ when $s=0,1+\mathcal{O}(\sqrt{\mu})$.


\paragraph{Combining the results from both charts.}

In precisely the same manner as we did near $\mu=1$, comparing \eqref{e:somethingelse} and \eqref{e:somethingelse2} completes the proof of Theorem~\ref{t:1} near $\mu=0$.


\subsection{Proof of Lemma~\ref{l:mis}}\label{s:3.5}

Recall the equation \eqref{e:sep}
\begin{equation}\label{e:fsep}
\begin{array}{rcl}
0 & = & \lambda(r_n,\mu) r_n + \varepsilon \left( r_{n+1}\cos\phi_n - 2r_n + r_{n-1}\cos\phi_{n-1} \right) \\
0 & = & (\omega(r_n,\mu,\varepsilon)-\rho) r_n + \varepsilon \left( r_{n+1}\sin\phi_n - r_{n-1}\sin\phi_{n-1} \right),
\end{array}
\end{equation}
with $1\leq n\leq N$ that describes synchronous patterns. We focus on $\mu\in(0,1)$.

First, we set $\varepsilon=0$ and obtain
\begin{eqnarray}
0 & = & \lambda(r_n,\mu) r_n \label{e:sepr} \\ \label{e:sepi}
0 & = & (\omega(r_n,\mu,0)-\rho) r_n.
\end{eqnarray}
Any sequence $(r_n)_{1\leq n\leq N}$ with $r_n\in\{0,r_\pm(\mu)\}$ for each $n$ satisfies \eqref{e:sepr}, and these are the only solutions of \eqref{e:sepr}. Assume that there are indices $i$ and $j$ with $r_i=r_+(\mu)$ and $r_j=r_-(\mu)$, then \eqref{e:sepi} for $n=i,j$ is given by
\begin{equation}\label{e:sepf}
\rho = \omega(r_+(\mu),\mu,0), \qquad
\rho = \omega(r_-(\mu),\mu,0)
\end{equation}
after dividing by the nonzero factors $r_\pm(\mu)$. If $\mu\in(0,1)$ is such that $\omega(r_-(\mu),\mu,0)\neq\omega(r_+(\mu),\mu,0)$, then \eqref{e:sepf} cannot have a solution $\rho$. The same argument applies to each closed interval $I$ of $(0,1)$ for which $\omega(r_-(\mu),\mu,0)\neq\omega(r_+(\mu),\mu,0)$ uniformly in $\mu\in I$. This completes the proof of Lemma~\ref{l:mis}(i).

Next, assume that $\omega(r,\mu,\varepsilon)=\omega_0(\mu)+\varepsilon\omega_1(r,\mu,\varepsilon)$, then writing $\rho=\omega_0(\mu)+\varepsilon\Omega$, substituting the expressions for $\rho$ and $\omega(r,\mu,\varepsilon)$ into \eqref{e:fsep}, dividing the phase equations by the common factor $\varepsilon$, and setting $\varepsilon=0$, we obtain
\begin{eqnarray}
0 & = & \lambda(r_n,\mu) r_n \label{e:sepr2} \\ \label{e:sepi2}
0 & = & (\omega_1(r_n,\mu,0)-\Omega) r_n + r_{n+1}\sin\phi_n - r_{n-1}\sin\phi_{n-1}.
\end{eqnarray}
The solutions to \eqref{e:sepr2} are again of the form $r_n\in\{0,r_\pm(\mu)\}$. As in Lemma~\ref{l:mis}(ii), we focus on the solution $\boldsymbol{r}=(r_+(\mu),\ldots,r_+(\mu),r_-(\mu),0,\ldots,0)$ that starts with $k\geq1$ contiguous values of $r_+(\mu)$. Using, for simplicity, the off-site boundary conditions $(r_0,\phi_0)=(r_1,0)$ from \eqref{e:fbc}, the first $k+1$ phase equations are given by
\[
\begin{array}{rclcl}
0 & = & \omega_1(r_+(\mu),\mu,0) - \Omega + \sin\phi_{1} & & n=1 \\
0 & = & \omega_1(r_+(\mu),\mu,0) - \Omega + \sin\phi_n - \sin\phi_{n-1} & & 2\leq n\leq k-1 \\
0 & = & \omega_1(r_+(\mu),\mu,0) - \Omega + \frac{r_-(\mu)}{r_+(\mu)} \sin\phi_k - \sin\phi_{k-1} & & n=k \\
0 & = & \omega_1(r_-(\mu),\mu,0) - \Omega - \frac{r_+(\mu)}{r_-(\mu)} \sin\phi_k & & n=k+1 \\
\end{array}
\]
Adding the equations for $n=1,\ldots,k$, we find that 
\[
(k+1) \left(\omega_1(r_+(\mu),\mu,0) - \Omega \right) + \frac{r_-(\mu)}{r_+(\mu)} \sin\phi_k = 0
\]
In particular, for $k\gg1$, we see that $\Omega=\omega_1(r_+(\mu),\mu,0)+\mathcal{O}(1/k)$ and substituting this solution into the phase equation for $n=k+1$ gives
\[
\sin\phi_k = \frac{r_-(\mu)}{r_+(\mu)} \left(\omega_1(r_-(\mu),\mu,0) - \omega_1(r_+(\mu),\mu,0)\right) + \mathcal{O}(1/k),
\]
which can have a solution uniformly in $k\gg1$  only when $|\omega_1(r_-(\mu),\mu,0)-\omega_1(r_+(\mu),\mu,0)|\leq \frac{r_+(\mu)}{r_-(\mu)}$. This completes the proof of Lemma~\ref{l:mis}(ii).


\section{Conservative coupling: arguing for isola branches}\label{s:4}

Following the same process as in \S\ref{s:3.1}, we transform \eqref{e:fcons} given by
\[
0 = \lambda(r_n,\mu) r_n + \rmi(\omega(r_n,\mu,\varepsilon)-\rho) r_n + \rmi\varepsilon \left( r_{n+1}\rme^{\rmi\phi_n} - 2r_n + r_{n-1}\rme^{-\rmi\phi_{n-1}}\right), \quad 1\leq n\leq N
\]
into the system
\begin{equation}\label{e:conspol}
\begin{array}{rcl}
0 & = & \lambda(r_n,\mu) r_n + \varepsilon \left( r_{n-1}\sin\phi_{n-1} - r_{n+1}\sin\phi_n \right) \\
0 & = & (\Omega+\varepsilon \omega_2(r_n,\mu,\varepsilon)) r_n + r_{n+1}\cos\phi_n - 2r_n + r_{n-1}\cos\phi_{n-1},
\end{array}
\end{equation}
where $1\leq n\leq N$. Throughout this section, we consider only on-site solutions and focus solely on the leading-order core equations. In particular, we do not prove persistence beyond leading order and do not consider the far-field equations: we believe that the results in \S\ref{s:3} can be applied to make the following results rigorous. The isola branch and the associated localized synchrony patterns, including the phase differences, are shown in Figure~\ref{f:isola_proof}.

\begin{figure}
\centering
\includegraphics[scale=1]{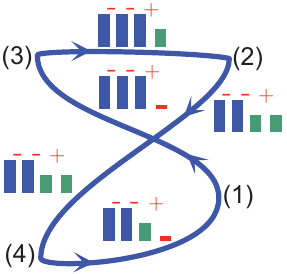}
\caption{Shown are the localized synchrony patterns along an isola branch, where the height of the rectangles indicates whether the amplitude at a node has value $0$, $r_-$, or $r_+$, while the signs shown in red between consecutive nodes in each pattern correspond to the phase difference across these nodes. The numbers at each bifurcation point along the branch will be referenced in the proof.}
\label{f:isola_proof}
\end{figure}

\paragraph{Fold bifurcations near $\mu=1$.} 

Choose $k\geq2$ and assume that $r_n=1$ for $1\leq n\leq k$ and $r_n=0$ for $n\geq k+1$. We use the normal form \eqref{e:foldnf}, apply the scaling \eqref{e:foldscale} with $\tilde{\varepsilon}=1$ to the system \eqref{e:conspol} at $\mu=1$, and set $\nu=0$. The $k$ core phase equations at $\nu=0$ are then given by
\[
\begin{array}{rclcl}
0 & = & \Omega + \cos\phi_n + \cos\phi_{n-1} - 2 & & 1\leq n\leq k-1 \\
0 & = & \Omega + \cos\phi_{k-1} - 2 & & n=k
\end{array}
\]
with $\phi_0:=-\phi_1$, and we select the solutions $\phi_n:=-\frac\pi2$ for $1\leq n\leq k-2$ and $\phi_{k-1}=\frac{\kappa\pi}{2}$ where $\kappa=\pm1$. The $k$ core equations for the amplitudes given by
\[
\begin{array}{rclcl}
0 & = & \tilde{\mu} - \sigma_n^2 + \sin\phi_{n-1} - \sin\phi_n & & 1\leq n\leq k-1 \\
0 & = & \tilde{\mu} - \sigma_k^2 + \sin\phi_{k-1} & & n=k
\end{array}
\]
and substituting the solution for $\phi_n$ we arrive at
\[
\begin{array}{rclcl}
0 & = & \tilde{\mu} - \sigma_1^2 + 2				& & n=1 \\
0 & = & \tilde{\mu} - \sigma_n^2        			& & 1\leq n\leq k-2 \\
0 & = & \tilde{\mu} - \sigma_{k-1}^2 - 1 - \kappa	& & n=k-1 \\
0 & = & \tilde{\mu} - \sigma_k^2 + \kappa			& & n=k.
\end{array}
\]
The node with the lowest value of the constant to the right of the $\sigma_n^2$-term exhibits the first fold bifurcation as $\tilde{\mu}$ is decreased (that is, as $\mu=1$ is approached from below). For $\kappa=-1$ where all nodes have phase differences $-\frac\pi2$, the node with index $k$ therefore exhibits a fold bifurcation as illustrated in Figure~\ref{f:isola_proof}(1). In contrast, when $\kappa=1$ where $\phi_{k-1}=\frac\pi2$ while the remaining nodes have $\phi_n=-\frac\pi2$, it is the node with index $k-1$ that exhibits a fold bifurcation; see Figure~\ref{f:isola_proof}(2). This establishes the transitions near $\mu=1$ shown in Figure~\ref{f:isola_proof}.

\paragraph{Hopf bifurcations near $\mu=0$.}

Choose $k\geq2$ and assume that $r_n=1$ for $1\leq n\leq k-1$ and $r_n=0$ for $n\geq k$. We use the normal form \eqref{e:hopfnf}, apply the scaling \eqref{e:hopfscale} with $\tilde{\varepsilon}=1$ to the system \eqref{e:conspol} at $\mu=0$, and set $\nu=0$. As in \S\ref{s:3.4}, the first $k$ core equations can be solved readily, and we therefore focus on the critical equations for $n=k$ given by
\begin{equation}\label{e:conscrit1}
0 = -\tilde{\mu}\sigma_k + \sigma_k^3 + \sin\phi_{k-1}, \qquad
0 = \cos\phi_{k-1}.
\end{equation}
We see that $\phi_{k-1}=\frac{\kappa\pi}{2}$ with $\kappa=\pm1$ satisfies the second equation, and the first equation becomes $0=-\tilde{\mu}\sigma_k+\sigma_k^3+\kappa$. First, we focus on the case $\kappa=1$, which is the scenario already studied in \S\ref{s:3.4}. As proved in \S\ref{s:3.4}, the node with index $k$ switches from $r_k=0$ to $r_k=r_-(\mu)$ with $\phi_{k-1}=\frac\pi2$. This completes the analysis of the transition in Figure~\ref{f:isola_proof}(3).

Next, we consider the situation where the phase $\phi_{k-1}$ has the opposite sign $\phi_{k-1}=-\frac\pi2$. In this case, the first equation in \eqref{e:conscrit1} becomes $0=-\tilde{\mu}\sigma_k+\sigma_k^3-1$, and the rightmost root $\sigma_k>0$ of this equation cannot undergo a fold bifurcation for $\tilde{\mu}\geq0$. We therefore consider the core amplitude equations for $n\in\{k,k+1\}$ with the scaling
\[
\varepsilon = \nu^3, \qquad
\mu 		= \tilde{\mu}\nu^{8/3}, \qquad
r_k 		= \nu \sigma_k, \qquad
r_{k+1}		= \nu^{4/3} \sigma_{k+1},
\]
where we changed the powers of the scaling for $\mu$ and $\sigma_{k+1}$, and arrive at
\[
0 = \sigma_k^3 + \sin\phi_{k-1}, \qquad
0 = -\tilde{\mu}\sigma_{k+1} + \sigma_{k+1}^3 + \sigma_k \sin\phi_k.
\]
With $\phi_{k-1}=-\frac\pi2$ and $\phi_k=\frac\pi2$, these equations become
\[
0 = \sigma_k^3 - 1, \qquad
0 = -\tilde{\mu}\sigma_{k+1} + \sigma_{k+1}^3 + \sigma_k.
\]
Thus, the first equation has the regular root $\sigma_k$, and the second equation then agrees again with the scenario studied in \S\ref{s:3.4}. We conclude that in this case the node with index $k+1$ switches from $r_{k+1}=0$ to $r_{k+1}=r_-(\mu)$ with $\phi_k=\frac\pi2$. This establishes the transitions near $\mu=0$ shown in Figure~\ref{f:isola_proof}(4).


\section{Conclusions and outlook}\label{s:5}

\paragraph{Conclusions.} 

We investigated the branch structure of localized synchrony patterns in a finite chain of weakly-coupled oscillators with dissipative and conservative coupling, where each oscillator was modeled by a bistable system with Ginzburg--Landau (or Lambda-Omega) nonlinearity. We showed rigorously that dissipative coupling leads to snaking of on- and off-site localized synchrony patterns where all oscillators share the same phase. We proved that the resulting continuous branch starts close to the state where all nodes are at rest and ends close to the state where all nodes are in their stable oscillatory state. In contrast, we showed formally that conservative coupling leads to a discrete stack of closed isola branches for each fixed number of oscillatory states in the on-site localized synchrony patterns. The phase differences across consecutive nodes is $-\frac\pi2$ except for the last activated node which has phase difference $\frac\pi2$ with the preceding node.

The two main assumptions we made are (i) bistability of the nonlinearity and (ii) frequency match of stable and unstable oscillators at each value of the bifurcation parameter. Bistability is necessary for the existence of localized patterns. If the frequencies of the stable and unstable oscillators do not match for a specific value $\mu=\mu_*$ of the bifurcation parameter, we proved that localized synchrony patterns do not exist for sufficiently weak coupling for $\mu$ near $\mu_*$. In particular, we cannot expect to be able to observe a branch of localized synchrony patterns that crosses through $\mu=\mu_*$.

We applied our results to a chain of weakly-coupled mechanical oscillators. If there is no frequency mismatch between stable and unstable oscillator states, our results imply that dissipative coupling allows us to access a range of localized oscillatory patterns through continuation of a single, fully connected branch. In contrast, for conservative coupling, localized oscillatory patterns lie on distinct disconnected closed isola branches, which makes it impossible to reach all of them through arclength continuation. We also demonstrated through numerical continuation that we do not observe regular isola or snaking branches if the frequencies of stable and unstable oscillator states are different.

Our results may have implications for physical and engineering systems that involve coupled oscillators. Similarly to how localized buckling patterns in thin cylindrical shells can, in principle, be accessed through dead loading starting from a pattern with a single buckling mode, it is possible that localized oscillatory excitation of a small number of nodes may spread to other oscillators and may eventually involve the full oscillator chain. This should be true regardless of whether synchrony patterns lie on stacks of isolas or on snaking branches. Our results indicate that the specific coupling structure (represented by the constant $c$ in \eqref{i:model}) strongly impacts not only the geometric branch structure (through the emergence of either isola or snaking branches) but, equally importantly, also the phase geometry of localized synchrony patterns: the synchrony patterns we found are either all in phase or else their phase differ by $-\frac\pi2$ across neighboring nodes. Depending on the physical or engineering context, this phase behavior may impact the overall dynamics of the underlying structure signicantly.

\paragraph{Open problems.}

We did not prove that the snaking branch begins exactly at the rest state $\boldsymbol{r}=0$ or ends at the fully oscillatory state, though Theorem~\ref{t:1} and the numerical continuation results shown in Figures~\ref{f:synchrony}(iii) and~\ref{f:patterns}(iii) indicate that the branch begins and end very close to these states. We believe that the snaking branch might emerge from the middle branch $\boldsymbol{r}=(r_-(\mu),\ldots,r_-(\mu))$  through $D_N$ symmetry-breaking bifurcations for $\mu$ close to $0$ and $1$, respectively, and note that this conjecture was proved in \cite{tian} for stationary (non-oscillatory) bistable dynamics on lattice rings. We also did not prove the statements for conservative coupling though we are confident that rigorous proofs are possible using the techniques developed here.

An interesting question is whether anti-phase solutions exist in the dissipative coupling regime where the phase differences are $\pi$ instead of $0$. The key difficulty for this case is that it is the first $k-1$ nodes that, to leading order, simultaneously exhibit a fold bifurcation at $\mu=1$ instead of the last activated node at index $k$. Similarly to the case of conservative coupling, we conjecture that anti-phase patterns for dissipative coupling lie on isolas.

Another avenue for future investigation is the stability of the localized oscillations. We expect that the analysis could proceed similarly to the proofs in \cite{bramburger2020stable} and yield exponential convergence to the amplitude components $\boldsymbol{r}$ and significantly slower algebraic convergence for the phase differences $\boldsymbol{\phi}$.

Finally, we focused our analysis solely on the bistable Ginzburg--Landau system \eqref{i:model} with nearest-neighbor coupling for two specific coupling constants that reflect conservative and dissipative coupling. It might be possible to generalize our results near the Hopf and fold bifurcation regimes to coupled systems with general nonlinearities since we exploited only the normal forms of these bifurcations in our analysis. However, we relied heavily on the specific form of how neighboring nodes are coupled both near these bifurcations and also in the bistable regime away from these bifurcations, so the key challenge is to understand how general linear coupling formulations manifest themselves in the normal-form nonlinearities. We believe that our techniques can be extended to analyse localized synchrony patterns in the bistable Ginzburg--Landau system \eqref{i:model} for more general networks beyond nearest-neighbor coupling.


\begin{Acknowledgment}
Bergland was supported through the NSF by grant DMS-$2038039$, Bramburger was supported by an NSERC Discovery Grant, and Sandstede was partially supported by the NSF through grant DMS-$2106566$.
\end{Acknowledgment}


\bibliographystyle{abbrv}
\bibliography{LocalizedOscillators.bib}

\begin{thebibliography}{10}

\bibitem{aguareles2016asymptotic}
M.~Aguareles, I.~Baldom{\`a}, and T.~M-Seara.
\newblock On the asymptotic wavenumber of spiral waves in systems.
\newblock {\em Nonlinearity}, 30(1):90, 2016.

\bibitem{bergland}
E.~Bergland.
\newblock {\em Multiple Timescales in the Fitzhugh-Nagumo Equation}.
\newblock PhD thesis, Brown University, 2024.

\bibitem{bramburger2019rotating}
J.~J. Bramburger.
\newblock Rotating wave solutions to lattice dynamical systems ii: Persistence
  results.
\newblock {\em Journal of Dynamics and Differential Equations}, 31(1):499--536,
  2019.

\bibitem{bramburger2020stable}
J.~J. Bramburger.
\newblock Stable periodic solutions to lambda-omega lattice dynamical systems.
\newblock {\em Journal of Differential Equations}, 268(7):3201--3254, 2020.

\bibitem{brown2003globally}
E.~Brown, P.~Holmes, and J.~Moehlis.
\newblock Globally coupled oscillator networks.
\newblock In {\em Perspectives and Problems in Nolinear Science}, pages
  183--215. Springer, 2003.

\bibitem{chiu2001synchronization}
C.-H. Chiu, W.-W. Lin, and C.-S. Wang.
\newblock Synchronization in lattices of coupled oscillators with various
  boundary conditions.
\newblock {\em Nonlinear Analysis: Theory, Methods \& Applications},
  46(2):213--229, 2001.

\bibitem{clerc2018chimera}
M.~Clerc, S.~Coulibaly, M.~Ferr{\'e}, and R.~Rojas.
\newblock Chimera states in a duffing oscillators chain coupled to nearest
  neighbors.
\newblock {\em Chaos: An Interdisciplinary Journal of Nonlinear Science},
  28(8):083126, 2018.

\bibitem{cohen1978rotating}
D.~S. Cohen, J.~C. Neu, and R.~R. Rosales.
\newblock Rotating spiral wave solutions of reaction-diffusion equations.
\newblock {\em SIAM Journal on Applied Mathematics}, 35(3):536--547, 1978.

\bibitem{cuevas2019discrete}
J.~Cuevas-Maraver and P.~G. Kevrekidis.
\newblock Discrete breathers in $\phi^4$ and related models.
\newblock In {\em A Dynamical Perspective on the $\phi^4$ Model}, pages
  137--162. Springer, 2019.

\bibitem{ermentrout1990oscillator}
G.~Ermentrout and N.~Kopell.
\newblock Oscillator death in systems of coupled neural oscillators.
\newblock {\em SIAM Journal on Applied Mathematics}, 50(1):125--146, 1990.

\bibitem{ermentrout1991multiple}
G.~B. Ermentrout and N.~Kopell.
\newblock Multiple pulse interactions and averaging in systems of coupled
  neural oscillators.
\newblock {\em Journal of Mathematical Biology}, 29(3):195--217, 1991.

\bibitem{flach1998discrete}
S.~Flach and C.~R. Willis.
\newblock Discrete breathers.
\newblock {\em Physics Reports}, 295(5):181--264, 1998.

\bibitem{fontanela2018dark}
F.~Fontanela, A.~Grolet, L.~Salles, A.~Chabchoub, and N.~Hoffmann.
\newblock Dark solitons, modulation instability and breathers in a chain of
  weakly nonlinear oscillators with cyclic symmetry.
\newblock {\em Journal of Sound and Vibration}, 413:467--481, 2018.

\bibitem{heagy1994synchronous}
J.~Heagy, T.~Carroll, and L.~Pecora.
\newblock Synchronous chaos in coupled oscillator systems.
\newblock {\em Physical Review E}, 50(3):1874, 1994.

\bibitem{hoppensteadt1997weakly}
F.~C. Hoppensteadt and E.~M. Izhikevich.
\newblock {\em Weakly connected neural networks}, volume 126.
\newblock Springer Science \& Business Media, 1997.

\bibitem{kuramoto1984chemical}
Y.~Kuramoto.
\newblock Chemical turbulence.
\newblock In {\em Chemical oscillations, waves, and turbulence}, pages
  111--140. Springer, 1984.

\bibitem{niedergesass2021experimental}
B.~Niederges{\"a}{\ss}, A.~Papangelo, A.~Grolet, A.~Vizzaccaro, F.~Fontanela,
  L.~Salles, A.~Sievers, and N.~Hoffmann.
\newblock Experimental observations of nonlinear vibration localization in a
  cyclic chain of weakly coupled nonlinear oscillators.
\newblock {\em Journal of Sound and Vibration}, 497:115952, 2021.

\bibitem{papangelo2017snaking}
A.~Papangelo, A.~Grolet, L.~Salles, N.~Hoffmann, and M.~Ciavarella.
\newblock Snaking bifurcations in a self-excited oscillator chain with cyclic
  symmetry.
\newblock {\em Communications in Nonlinear Science and Numerical Simulation},
  44:108--119, 2017.

\bibitem{papangelo2018multiple}
A.~Papangelo, N.~Hoffmann, A.~Grolet, M.~Stender, and M.~Ciavarella.
\newblock Multiple spatially localized dynamical states in friction-excited
  oscillator chains.
\newblock {\em Journal of Sound and Vibration}, 417:56--64, 2018.

\bibitem{parker2021standing}
R.~Parker and A.~Aceves.
\newblock Standing-wave solutions in twisted multicore fibers.
\newblock {\em Physical Review A}, 103(5):053505, 2021.

\bibitem{parker2020existence}
R.~Parker, P.~Kevrekidis, and B.~Sandstede.
\newblock Existence and spectral stability of multi-pulses in discrete
  hamiltonian lattice systems.
\newblock {\em Physica D: Nonlinear Phenomena}, 408:132414, 2020.

\bibitem{paullet1994existence}
J.~Paullet, B.~Ermentrout, and W.~Troy.
\newblock The existence of spiral waves in an oscillatory reaction-diffusion
  system.
\newblock {\em SIAM Journal on Applied Mathematics}, 54(5):1386--1401, 1994.

\bibitem{paullet1998spiral}
J.~E. Paullet and G.~B. Ermentrout.
\newblock Spiral waves in spatially discrete $\lambda$-$\omega$ systems.
\newblock {\em International Journal of Bifurcation and Chaos}, 8(01):33--40,
  1998.

\bibitem{shi2015existence}
H.~Shi and Y.~Zhang.
\newblock Existence of breathers for discrete nonlinear {S}chr{\"o}dinger
  equations.
\newblock {\em Applied Mathematics Letters}, 50:111--118, 2015.

\bibitem{shiroky2020nucleation}
I.~Shiroky, A.~Papangelo, N.~Hoffmann, and O.~Gendelman.
\newblock Nucleation and propagation of excitation fronts in self-excited
  systems.
\newblock {\em Physica D: Nonlinear Phenomena}, 401:132176, 2020.

\bibitem{tian}
M.~Tian.
\newblock {\em Patterns in Network Dynamics}.
\newblock PhD thesis, Brown University, 2024.

\bibitem{troy2022logarithmic}
W.~C. Troy.
\newblock Logarithmic spiral solutions of the {K}opell--{H}oward lambda--omega
  reaction--diffusion equations.
\newblock {\em Chaos: An Interdisciplinary Journal of Nonlinear Science},
  32(5):053104, 2022.

\bibitem{vakakis2001normal}
A.~F. Vakakis, L.~I. Manevitch, Y.~V. Mikhlin, V.~N. Pilipchuk, and A.~A.
  Zevin.
\newblock {\em Normal modes and localization in nonlinear systems}.
\newblock Springer, 2001.

\end{thebibliography}

\end{document}